\newtheorem{theorem}{Theorem}
\newtheorem{properties}[theorem]{Properties}
\newtheorem{definition}[theorem]{Definition}
\newtheorem{proposition}[theorem]{Proposition}
\newcommand{\bi}{\begin{itemize}}
\newcommand{\ei}{\end{itemize}}
\newcommand{\bd}{\begin{displaymath}}
\newcommand{\ed}{\end{displaymath}}
\newcommand{\be}{\begin{eqnarray*}}
\newcommand{\ee}{\end{eqnarray*}}
\title{\LARGE \bf
On Robust Computation of Koopman Operator and Prediction in Random Dynamical Systems}
\author{Subhrajit Sinha, Bowen Huang, and  Umesh Vaidya\\
\thanks{Financial support from the National Science Foundation grant  ECCS-1150405 and CNS-1329915 is gratefully acknowledged. S. Sinha and U. Vaidya is with the Department of Electrical \& Computer Engineering,
Iowa State University, Ames, IA 50011
        {\tt\small ugvaidya@iastate.edu}}%
}
\begin{document}
\maketitle

\begin{abstract}

In the paper, we consider the problem of robust approximation of transfer Koopman and Perron-Frobenius (P-F) operators from noisy time series data. In most applications, the time-series data obtained from simulation or experiment is corrupted with either measurement or process noise or both. The existing results show the applicability of algorithms developed for the finite dimensional approximation of deterministic system to a random uncertain case. However, these results hold true only in asymptotic and under the assumption of infinite data set. In practice the data set is finite, and hence it is important to develop algorithms that explicitly account for the presence of uncertainty in data-set. 
We propose a robust optimization-based framework for the robust approximation of the transfer operators, where the uncertainty in data-set is treated as deterministic norm bounded uncertainty. The robust optimization leads to a min-max type optimization problem for the approximation of transfer operators. This robust optimization problem is shown to be equivalent to regularized least square problem. This equivalence between robust optimization problem and regularized least square problem allows us to comment on various interesting properties of the obtained solution using robust optimization. In particular, the robust optimization formulation captures inherent tradeoffs between the quality of approximation and complexity of approximation. These tradeoffs are necessary to balance for the proposed application of transfer operators, for the design of optimal predictor. Simulation results demonstrate that our proposed robust approximation algorithm performs better than the Extended Dynamic Mode Decomposition (EDMD) and DMD algorithms for a system with process and measurement noise.

\end{abstract}

\section{Introduction}

There is increased research trend towards application of transfer operator theoretic methods involving transfer Perron-Frobenius and Koopman operators for the analysis and control of nonlinear systems
\cite{Dellnitz_Junge,Mezic2000,froyland_extracting,Junge_Osinga,Mezic_comparison,Dellnitztransport,mezic2005spectral,Mehta_comparsion_cdc,Vaidya_TAC,raghunathan2014optimal,susuki2011nonlinear,mezic_koopmanism,mezic_koopman_stability,surana_observer}.
The basic idea behind these methods is to shift the focus from the state space where the system evolution is nonlinear to measure space or space of functions where the system evolution is linear. The linearity of the transfer operator framework offers several advantages for analysis and design problems involving nonlinear systems. Furthermore, these methods are amicable to data-driven analysis, where the finite dimensional approximation of Koopman and P-F operators can be constructed from times-series data obtained from simulation or experiment. 
The success of the operator theoretic framework relies on the ability to form an accurate finite dimensional approximation of these operators. Towards this goal various data-driven methods are proposed for the finite dimensional approximation of these operators \cite{dellnitz2002set, Mezic2000,DMD_schmitt,rowley2009spectral,EDMD_williams} with Dynamic Mode Decomposition (DMD) and extended DMD being the popular ones. By exploiting the duality between Koopman and P-F operators the work in \cite{Umesh_NSDMD} provides novel naturally structured DMD algorithm for data-driven approximation of both Koopman and P-F operator that preserves positivity and Markov properties of these operators. 

Recent work has focused on the data-driven approximation of Koopman operator for random dynamical systems (RDS) \cite{mezic_stochastic_koopman_spectrum}, \cite{PhysRevE.96.033310}. In \cite{mezic_stochastic_koopman_spectrum} the authors have provided characterization of the spectrum and eigenfunctions of the Koopman operator for discrete and continuous time RDS, while in \cite{PhysRevE.96.033310}, the authors have provided an algorithm to compute the Koopman operator for systems with both process and observation noise. The results in \cite{PhysRevE.96.033310} claim that DMD algorithm will approximate Koopman operator for RDS  in the asymptotic limit of large data set. Furthermore, \cite{PhysRevE.96.033310} proposed system theoretic-based subspace method for the approximation of Koopman operator with measurement noise where DMD does not perform well. However, in practice the data set is finite, and it is essential to account for the presence of uncertainty explicitly in the algorithm. In this paper, we propose a robust approximation of transfer operators using the robust optimization-based framework. Robust optimization-based approach treats uncertainty in the time-series data as deterministic norm bounded uncertainty resulting in min-max type optimization problem for the finite dimensional approximation of transfer operators. In particular, a robust approximation of transfer operator leads to robust least square optimization problem where the uncertainty acts as an adversary which tries to maximize the least square error. Simulation results suggest that the proposed robust optimization-based approach leads to a better approximation of transfer operators for RDS with both process and measurement noise compared to DMD or subspace DMD method.     

Existing results from  optimization theory establish equivalence between robust least square problem and a least square problem with regularization term \cite{caramanis201214}. This equivalence has an interesting interpretation of the solution obtained using proposed robust optimization. In particular, the regularization term imposes sparsity structure on the approximation. More importantly, the regularization term allows us to achieve a trade-off between the quality of approximation and complexity of approximating function. This feature of the robust approximation solution has significant consequence towards the application of transfer operator for the design of data-driven predictor \cite{korda_mezic_predictor}. In particular, the regularization term prevents over-fitting of training data to model parameters thereby enabling better prediction on the test data set. We exploit this property of robust optimization-based approximation framework to propose a data-driven predictor for a nonlinear system. 

The organization of the paper is as follows. In Section \ref{section_operator}, we provide a brief overview of transfer Perron-Frobenius and Koopman operator for random dynamical systems and discuss properties of these two operators. In Section \ref{section_main}, we present the main results on robust optimization-based framework for robust approximation of transfer Koopman and P-F operators. Results on application of the developed framework for the design of data-driven predictor for nonlinear systems are discussed in Section \ref{section_predictor}.
Simulation results are presented in Section \ref{section_simulation} followed by conclusions in Section \ref{section_conclusion}.


\section{Transfer operators for stochastic system}\label{section_operator}

Consider a discrete time random dynamical system of the form
\begin{eqnarray}
x_{t+1}=T(x_t,\xi_t)\label{system}
\end{eqnarray}
where $T:X\times W \to  X$ with $X\subset \mathbb{R}^N$ is assumed to be invertible with respect to $x$ for each fixed value of $\xi$ and smooth diffeomorphism. $\xi_t\in W$ is assumed to be independent identically distributed (i.i.d) random variable drawn from probability distribution $\vartheta$ i.e., 
\[{\rm Prob}(\xi_t\in B)=\vartheta(B)\]
for every set $B\subset W$ and all $t$.
Furthermore, we denote by ${\cal B}(X)$ the Borel-$\sigma$ algebra on $X$ and ${\cal M}(X)$ the vector space of bounded complex-valued measure on $X$.  Associated with this discrete time dynamical system are two linear operators namely Koopman and Perron-Frobenius (P-F) operator. These two operators are defined as follows.
\begin{definition}[Perron-Frobenius Operator]  $\mathbb{P}:{\cal M}(X)\to {\cal M}(X)$ is given by

\begin{eqnarray}
[\mathbb{P}\mu](A)=\int_{{\cal X} }\int_W\chi_{A}(T(x,v))d\vartheta(v)d\mu(x)=&\int_X p(x,A)d\mu(x)
\end{eqnarray}
where $\chi_A(x)$ is the indicator function for set $A$ and $p(x,A)$ is the transition probability function.  \end{definition}
For deterministic dynamical system $p(x,A)=\delta_{T(x)}(A)$. Under the assumption that $p(x,\cdot)$ is absolutely continuous with respect to Lebesgue measure, $m$, we can write 
\[p(x,A)=\int_A k(x,y)dm(y)\]
for all $A\subset X$. Under this absolutely continuous assumption, the P-F operator on the space of densities $L_1(X)$ can be written as  \footnote{with some abuse of notation we are using the same notation for the P-F operator defined on the space of measure and densities.}
\[[\mathbb{P}g](y)=\int_X k(x,y)g(x)dm(x)\]
\begin{definition}[Invariant measures] Invariant measures are the fixed points of
the P-F operator $\mathbb{P}$ that are additionally probability measures. Let $\bar \mu$ be the invariant measure then, $\bar \mu$ satisfies
\[\mathbb{P}\bar \mu=\bar \mu\]
\end{definition}
Under the assumption that the state space $X$ is compact, it is known that the P-F operator admits at least one invariant measure.
\begin{definition} [Koopman Operator] Given any $h\in\cal{F}$, $\mathbb{U}:{\cal F}\to {\cal F}$ is defined by
\[[\mathbb{U} h](x)={\bf E}_{\xi}[h(T(x,\xi))]=\int_W h(T(x,v))d\vartheta(v)\]
\end{definition}

\begin{properties}\label{property}
Following properties for the Koopman and Perron-Frobenius operators can be stated.

\begin{enumerate}
\item [a).] For any function $h\in{\cal F}$ such that $h\geq 0$, we have $[\mathbb{U}h](x)\geq 0$ and hence Koopman is a positive operator.

\item [d).] If we define P-F operator act on the space of densities i.e., $L_1(X)$ and Koopman operator on space of $L_\infty(X)$ functions, then it can be shown that the P-F and Koopman operators are dual to each others as follows 
\begin{eqnarray*}
&&\left<\mathbb{U} f,g\right>=\left<f,\mathbb{P} g\right>
\end{eqnarray*}
where $f\in L_{\infty}(X)$ and $g\in L_1(X)$.

\item [e).] For $g(x)\geq 0$, $[\mathbb{P}g](x)\geq 0$.

\item [f).] Let $(X,{\cal B},\mu)$ be the measure space where $\mu$ is a positive but not necessarily the invariant measure, then the P-F operator  satisfies  following property.
\[\int_X [\mathbb{P}g](x)d\mu(x)=\int_X g(x)d\mu(x)\]\label{Markov_property}
\end{enumerate}
\end{properties}

\section{Robust Approximation of Koopman and P-F operator}\label{section_main}

In this section, we derive the robust version of Extended Dynamic Mode Decomposition. Results for the robust implementation of DMD, Kernel EDMD and NSDMD will follow exactly along similar lines. 
Consider snapshots of data set obtained from simulating a discrete time random dynamical system $x\to T(x,\xi)$ or from an experiment
\begin{eqnarray}
 X = [x_0,x_2,\ldots,x_M]
 \label{data}
\end{eqnarray}
where $x_i\in X\subset \mathbb{R}^n$. The data-set $\{x_k\}$ can be viewed as sample path trajectory generated by random dynamical system and could be corrupted by either process or measurement noise or both. A large number of sample path trajectories need to be simulated to realize sufficient statistics of the random dynamical system. However, in practice, only few sample path trajectories over finite time horizon are available, and it is hard to approximate the statistics of RDS using the limited amount of data-set. Furthermore, rarely one knows the probability distribution of the underlying noise process, i.e., $\vartheta$. Estimating $\vartheta$ is in itself a challenging problem. In spite of these difficulties, it is essential to develop an algorithm for the approximation of transfer operators that explicitly account for the uncertainty in data-set. We propose a robust optimization-based approach to address this challenge. In particular, we consider deterministic, but norm bounded uncertainty in the data set. Since the trajectory $\{x_k\}$ is one particular realization of the RDS, the other random realization can be assumed to be obtained by perturbing $\{x_k\}$. We assume that the data points $x_k$ are perturbed by norm bounded deterministic perturbation of the form 
\[\delta x_k=x_k+\delta,\;\;\; \delta\in \Delta.\]
Several possible choices for the uncertainty set $\Delta$ can be considered. For example
\begin{eqnarray}
\Delta:=\{\delta \in \mathbb{R}^n:\;\; \parallel \delta\parallel_2\leq \rho\}\label{set1}
\end{eqnarray}
restrict the $2$-norm of $\delta$ to $\rho$. Another possible choice could be 
\begin{eqnarray}
\Delta:=\{\delta \in \mathbb{R}^n:\;\; \parallel \delta\parallel_{Q_i}\leq 1,\;\;i=1,\ldots, d\}\label{set2}
\end{eqnarray}
where $Q_i\geq 0$ and implies that uncertainty $\delta$ lies at the intersection of ellipsoids. More generally, one can also consider $\Delta$ set to be of the form
\begin{eqnarray}\Delta=\{\delta\in \mathbb{R}^n: h_i(\delta)\leq 0,\;\;i=1,\ldots,d\}\label{set3}
\end{eqnarray}
for some convex function $h_i(\delta)$.
 These different choices for the uncertainty set $\Delta$ allow us to encapsulate the information about the uncertainty $\delta$.  For example, if the vector random variable in $\mathbb{R}^2$ is uniformly distributed with support in interval $[-d,d]\times [-d,d]$, then the uncertainty set $\Delta$ can be written as intersection of two ellipsoids $\mathcal{E}_1 = \{(x,y)| \frac{x^2}{a^2}+\frac{y^2}{b^2}\leq 1\}$ and $\mathcal{E}_2 = \{(x,y)| \frac{x^2}{b^2}+\frac{y^2}{a^2}\leq 1\}$ where $a>>d$ and ${1}/{a^2} + {1}/{b^2} = {1}/{d^2}$(Fig. \ref{ellipse_uncertainty}(a)). For example, let $d = 0.5$. Then $[-0.5,0.5]\times [-0.5,0.5]$ can be expressed as the intersection of the two ellipsoids $\mathcal{E}_1$ and $\mathcal{E}_2$, with $a=50$ and $b=0.4996$, as shown in figure \ref{ellipse_uncertainty}(b).
\begin{figure}[htp!]
\centering
\subfigure[]{\includegraphics[scale=.225]{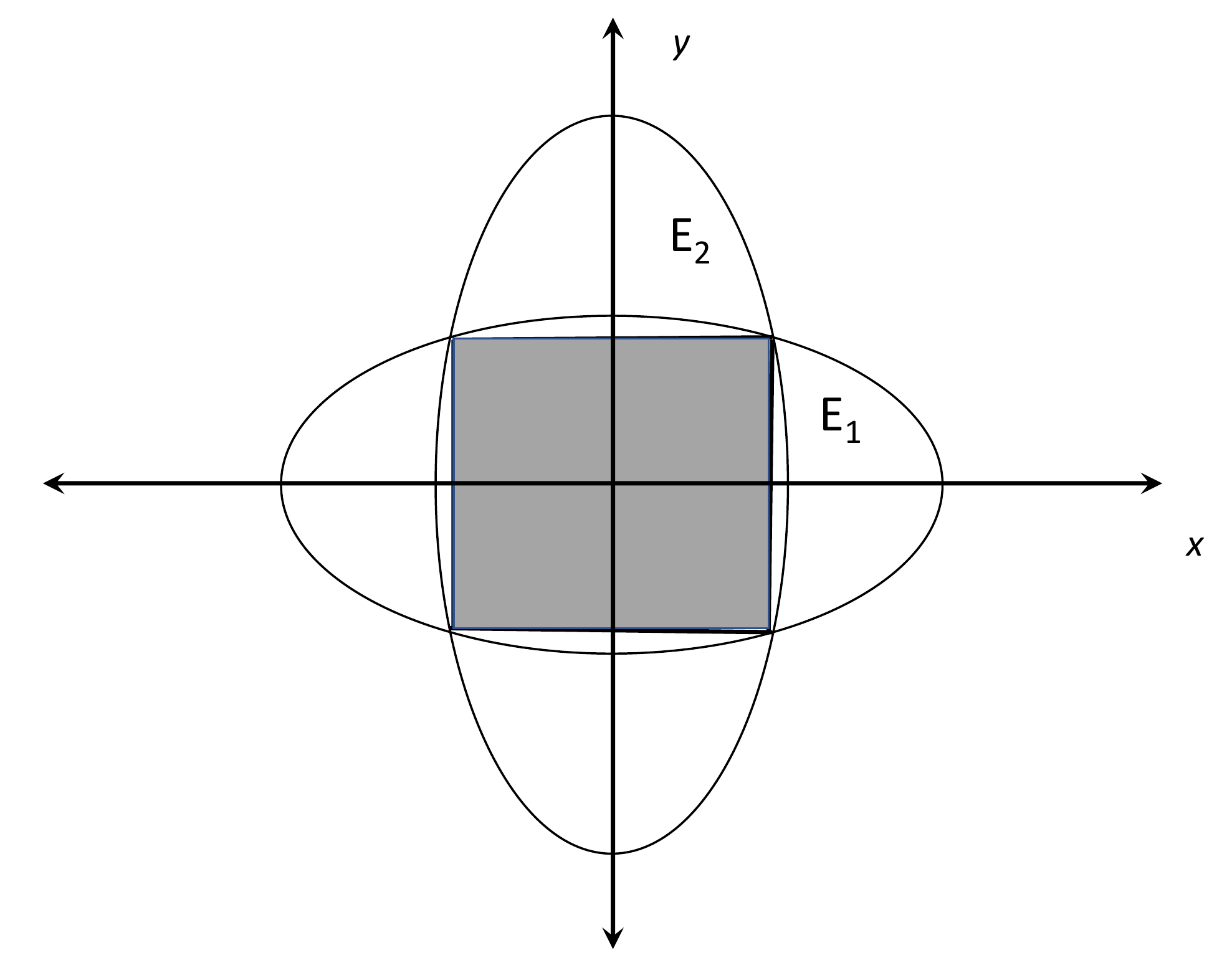}}
\subfigure[]{\includegraphics[scale=.2]{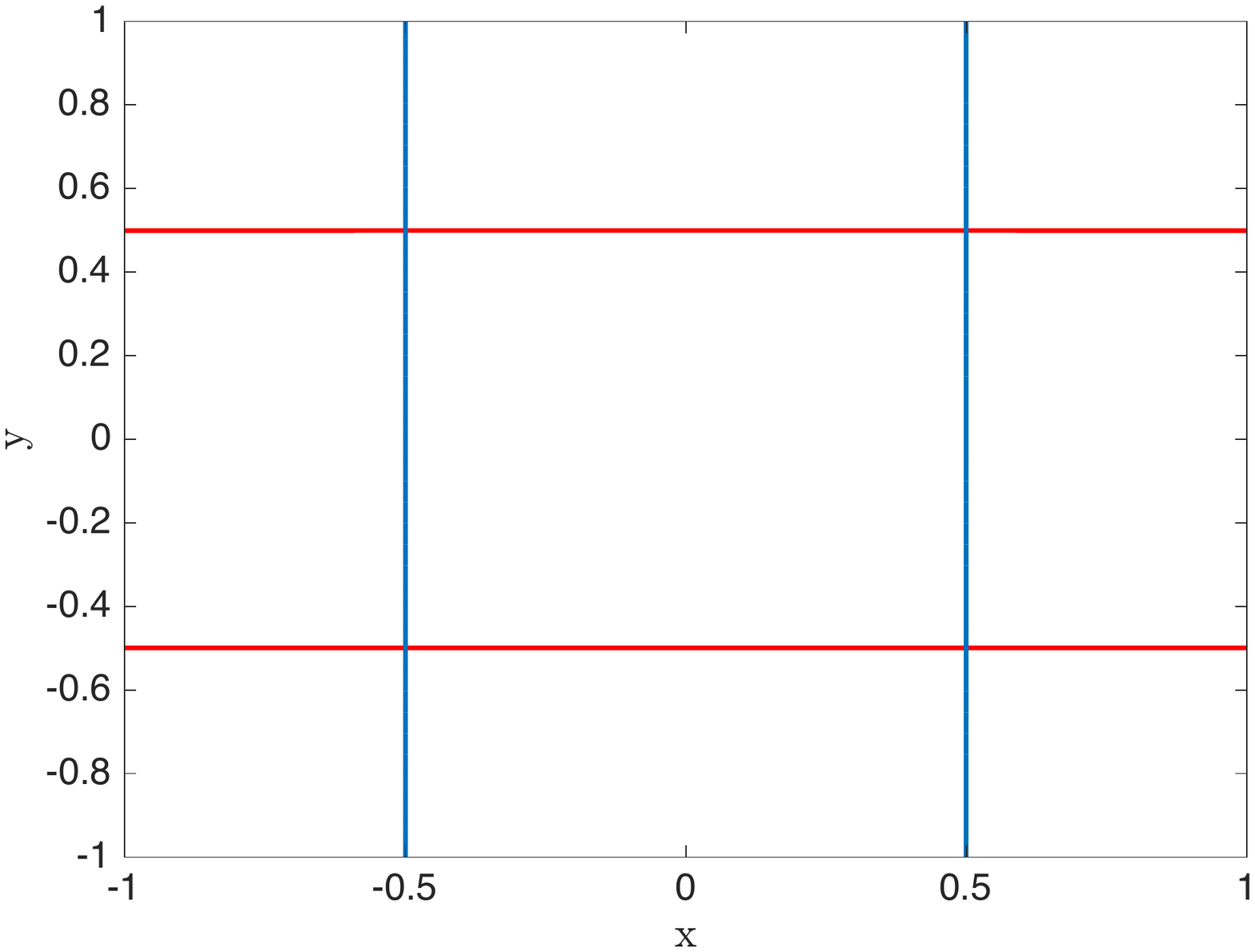}}
\caption{(a) Uncertainty set as intersection of two ellipsoids. (b) Representation of the uncertainty set $\Delta = [-0.5,0.5]\times [-0.5,0.5]$.}\label{ellipse_uncertainty}
\end{figure}


Now let $\mathcal{D}=
\{\psi_1,\psi_2,\ldots,\psi_K\}$ be the set of dictionary functions or observables. The dictionary functions are assumed to belong to $\psi_i\in L_2(X,{\cal B},\mu)={\cal G}$, where $\mu$ is some positive measure, not necessarily the invariant measure of $T$. Let ${\cal G}_{\cal D}$ denote the span of ${\cal D}$ such that ${\cal G}_{\cal D}\subset {\cal G}$. The choice of dictionary functions are very crucial and it should be rich enough to approximate the leading eigenfunctions of Koopman operator. Define vector valued function $\mathbf{\Psi}:X\to \mathbb{C}^{K}$ as
\begin{equation}
\mathbf{\Psi}(x):=\begin{bmatrix}\psi_1(x) & \psi_2(x) & \cdots & \psi_K(x)\end{bmatrix}\label{dic_function}
\end{equation}
In this application, $\mathbf{\Psi}$ is the mapping from physical space to feature space. Any function $\phi,\hat{\phi}\in \mathcal{G}_{\cal D}$ can be written as
\begin{eqnarray}
\phi = \sum_{k=1}^K a_k\psi_k=\boldsymbol{\Psi a},\quad \hat{\phi} = \sum_{k=1}^K \hat{a}_k\psi_k=\boldsymbol{\Psi \hat{a}}\label{expand}
\end{eqnarray}
for some set of coefficients $\boldsymbol{a},\boldsymbol{\hat{a}}\in \mathbb{C}^K$. 
Let \begin{eqnarray}
 \hat{\phi}(x)=[\mathbb{U}\phi](x)+r=E_\xi[\phi(T(x,\xi))]+r. \label{residual}
\end{eqnarray}
Unlike deterministic case where we evaluate (\ref{residual}) at the data point $\{x_k\}$, for the uncertain case  we do not have sufficient data points to evaluate the expected value in the above expression. Instead we use the fact that different realizations of the RDS will consist of the form $\{x_k+\delta\}$ with $\delta\in \Delta$ to write (\ref{residual}) as follows:
\begin{eqnarray}
\hat{\phi}(x_m + \delta x_m)=\phi(x_{m+1})+r, \;\;\;k=1,\ldots,M-1.
\end{eqnarray}
The objective is to minimize the residual for not just one pair of data points $\{x_m,x_{m+1}\}$, but over all possible pairs of data points of the form $\{x_m+\delta,x_{m+1}\}$. 
Using (\ref{expand}) we write the above as follows:
\[
\boldsymbol{\Psi}(x_k + \delta x_k)\boldsymbol {\hat{a}}=\boldsymbol{\Psi}(x_{k+1})\boldsymbol {a}+r.
\]

We seek to find matrix $\bf K$, the finite dimensional approximation of Koopman operator that maps coefficient vector $\boldsymbol{a}$ to $\boldsymbol{\hat{a}}$, i.e., ${\bf K}\boldsymbol{a}=\boldsymbol{\hat a}$, while minimizing the residual term, $r$.  Premultiplying by $\boldsymbol{\Psi}^\top( x_m)$ on both the sides of above expression and summing over $m$ we obtain 
\[\left[\frac{1}{M}\sum_m \boldsymbol{\Psi}^\top( x_m) \boldsymbol{\Psi}(x_m + \delta x_m){\bf K}-\boldsymbol{\Psi}^\top(x_m)\boldsymbol{\Psi}( x_{m+1})\right]{\boldsymbol a}.\]
In the absence of the uncertainty the objective is to minimize the appropriate norm of the  quantity inside the bracket over all possible choices of matrix $\bf K$. However, for robust approximation, presence of uncertainty acts as an adversary whose goal is to maximize the residual term. Hence the robust optimization problem can be formulated as a $\min-\max$ optimization problem as follows. 

\begin{equation}\label{edmd_robust}
\min\limits_{\bf K}\max_{\delta\in \Delta}\parallel {\bf G}_\delta{\bf K}-{\bf A}\parallel_F=:\min\limits_{\bf K}\max_{\delta\in \Delta} {\cal F}({\bf K}, {\bf G}_\delta,{\bf A})
\end{equation}
where
\begin{eqnarray}\label{edmd1}
&&{\bf G}_\delta=\frac{1}{M}\sum_{m=1}^M \boldsymbol{\Psi}({ x}_m)^\top \boldsymbol{\Psi}({x_m + \delta x}_m)\nonumber\\
&&{\bf A}=\frac{1}{M}\sum_{m=1}^M \boldsymbol{\Psi}({ x}_m)^\top \boldsymbol{\Psi}({ x}_{m+1}),
\end{eqnarray}
with ${\bf K},{\bf G}_\delta,{\bf A}\in\mathbb{C}^{K\times K}$. The $\min-\max$ optimization problem (\ref{edmd_robust}) is in general nonconvex and will depend on the choice of dictionary functions. This is true because ${\cal F}$ in (\ref{edmd_robust}) is not in general concave function of $\delta$ for fixed $\bf K$. 
Hence, we convexify the problem as follows

\begin{equation}\label{edmd_robust_convex}
\min\limits_{\bf K}\max_{\delta{\bf G}\in \bar \Delta}\parallel ({\bf G}+\delta {\bf G}){\bf K}-{\bf A}\parallel_F
\end{equation}
where $\delta {\bf G}\in \mathbb{R}^{K\times K}$ is the new perturbation term characterized by uncertainty set  $\bar \Delta$ which lies in the feature space of dictionary function and the matrix ${\bf G}=\frac{1}{M}\sum_{m=1}^M \boldsymbol{\Psi}({ x}_m)^\top \boldsymbol{\Psi}({x}_m)
$. $\bar \Delta$ is the new uncertainty set defined in the feature space and will inherit the structure from set $\Delta $ in the data space. In particular, following result can be used to connect the two uncertainty set, $\Delta$ and $\bar \Delta$ when $\Delta$ is described as in Eq. (\ref{set1}).
\begin{proposition}\label{prop_uncertainty_set}
In the convex problem (\ref{edmd_robust_convex}), $\delta G$ is bounded as 
\begin{eqnarray}
\parallel \delta G\parallel_F \leq \lambda \Lambda \Gamma
\end{eqnarray}
where $\parallel\delta x_m\parallel_F\leq \lambda$, $\parallel {\bf \Psi}(x_m)\parallel_F\leq \Lambda$ and $\parallel {\bf \Psi}'(x_m)\parallel_F\leq \Gamma$ for all m.
\end{proposition}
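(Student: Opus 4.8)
The plan is to identify the feature-space perturbation $\delta\mathbf{G}$ with the difference between the perturbed and unperturbed Gram-type matrices and then bound it term by term using a first-order expansion of the dictionary. Comparing the definition of ${\bf G}_\delta$ in (\ref{edmd1}) with ${\bf G}=\frac{1}{M}\sum_m \boldsymbol{\Psi}(x_m)^\top\boldsymbol{\Psi}(x_m)$, the natural identification is
\[
\delta\mathbf{G}=\mathbf{G}_\delta-\mathbf{G}=\frac{1}{M}\sum_{m=1}^M \boldsymbol{\Psi}(x_m)^\top\left[\boldsymbol{\Psi}(x_m+\delta x_m)-\boldsymbol{\Psi}(x_m)\right],
\]
so the whole statement reduces to bounding the Frobenius norm of this average.

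First I would expand the dictionary increment to first order. Since each $\psi_k$ is smooth, $\boldsymbol{\Psi}(x_m+\delta x_m)-\boldsymbol{\Psi}(x_m)=\delta x_m^\top\,\boldsymbol{\Psi}'(x_m)+O(\parallel\delta x_m\parallel_2^2)$, where $\boldsymbol{\Psi}'(x_m)$ denotes the Jacobian of the dictionary map at $x_m$ (the matrix whose columns are the gradients $\nabla\psi_k(x_m)$). Keeping the leading linear term gives each summand the form $\boldsymbol{\Psi}(x_m)^\top\,\delta x_m^\top\,\boldsymbol{\Psi}'(x_m)$, namely the product of the outer product $\boldsymbol{\Psi}(x_m)^\top\delta x_m^\top$ with the Jacobian.

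Next I would apply the triangle inequality to pull the norm inside the average, and then submultiplicativity of the Frobenius norm, $\parallel AB\parallel_F\le\parallel A\parallel_F\parallel B\parallel_F$, to each summand. Using the identity that the Frobenius norm of an outer product $uv^\top$ equals $\parallel u\parallel_2\parallel v\parallel_2$, together with the fact that for a vector the Euclidean and Frobenius norms coincide, each summand is bounded by
\[
\parallel\boldsymbol{\Psi}(x_m)\parallel_F\,\parallel\delta x_m\parallel_F\,\parallel\boldsymbol{\Psi}'(x_m)\parallel_F\le \Lambda\,\lambda\,\Gamma .
\]
Since the average of $M$ terms each bounded by $\lambda\Lambda\Gamma$ is itself bounded by $\lambda\Lambda\Gamma$, the claimed bound $\parallel\delta\mathbf{G}\parallel_F\le\lambda\Lambda\Gamma$ follows.

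The step I expect to be the main obstacle is the rigorous treatment of the remainder in the Taylor expansion: the proposition is essentially a first-order (linearized) estimate, so to make it exact one must either assume the perturbations $\delta x_m$ are small enough that the second-order terms are negligible, or replace the pointwise expansion by a mean-value/integral form of the increment and bound the Jacobian uniformly along the segment from $x_m$ to $x_m+\delta x_m$ (which is precisely what the hypothesis $\parallel\boldsymbol{\Psi}'(x_m)\parallel_F\le\Gamma$ for all $m$ is designed to supply). Once the dimensions of the outer product and the Jacobian are lined up correctly, the remaining steps are routine norm bookkeeping.
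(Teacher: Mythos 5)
Your proposal is correct and follows essentially the same route as the paper's own proof: identify $\delta \mathbf{G} = \mathbf{G}_\delta - \mathbf{G}$ via a first-order Taylor expansion of $\boldsymbol{\Psi}$, then apply the triangle inequality and submultiplicativity of the Frobenius norm term by term. If anything, you are more careful than the paper, which silently treats the Taylor expansion ${\bf \Psi}(x_m+\delta x_m) = {\bf \Psi}(x_m) + {\bf \Psi}'(x_m)\,\delta x_m$ as an exact equality, whereas you explicitly flag the remainder term as the point needing a mean-value or smallness argument.
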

\begin{proof}
From Taylor series expansion we have, ${\bf \Psi}(x_m+\delta x_m) = {\bf \Psi}(x_m) +  {\bf \Psi}'(x_m) \delta x_m$, where ${\bf \Psi}'(x_m)$ is the first derivative of ${\bf \Psi}(x)$ at $x_m$. Hence,
\begin{eqnarray*}
G_{\delta} = G + \frac{1}{M}\sum_{m=1}^M{\bf \Psi}^\top (x_m)\delta x_m {\bf \Psi}'(x_m) 
\end{eqnarray*}
Let $\delta G = \frac{1}{M}\sum_{m=1}^M{\bf \Psi}^\top (x_m)\delta x_m {\bf \Psi}'(x_m) $. Hence,
\begin{eqnarray*}
\parallel \delta G \parallel_F &=& \parallel \frac{1}{M}\sum_{m=1}^M{\bf \Psi}^\top (x_m)\delta x_m {\bf \Psi}'(x_m) \parallel_F\\
&\leq& \frac{1}{M}\sum_{m = 1}^M\parallel {\bf \Psi}^\top (x_m)\delta x_m {\bf \Psi}'(x_m)\parallel_F \\
&\leq&\frac{1}{M}\sum_{m=1}^M\parallel {\bf \Psi}^\top (x_m)\parallel_F \cdot \parallel\delta x_m \parallel_F \cdot \parallel {\bf \Psi}'(x_m)\parallel_F\\
&\leq& \lambda \Lambda \Gamma
\end{eqnarray*}
\end{proof}
Similar results can be used to connect the two uncertainty set $\Delta$ and $\bar \Delta$ for the case where $\Delta$ is described in Eqs. (\ref{set2}) and (\ref{set3}) to convert non-convex optimization problem to convex. For the uncertainty set given in Eq. (\ref{set2}), it can be easily shown that the problem simplifies to case described in the above proposition, where $\lambda$ depends on the eigenvalues of the matrices $Q_i$.  Similarly, for the uncertainty set given in (\ref{set3}), under the assumption that $\Delta$ is compact, it also boils down to proposition \ref{prop_uncertainty_set}. 

In \cite{Umesh_NSDMD}, we proposed Naturally Structured Dynamic Mode Decomposition (NSDMD) algorithm for finite dimensional approximation of the transfer Koopman and P-F operator. Apart from preserving positivity and Markov properties of the transfer operator, this algorithm exploits the duality between P-F and Koopman operator to provide the approximation of P-F operator. 
The algorithm presented for the robust approximation of Koopman operator can be combined with NSDMD for the robust approximation of P-F operator. In particular, under the assumption that all the dictionary functions are positive, following modification can be made to optimization formulation (\ref{edmd_robust_convex}) for the approximation of Koopman operator. 

\begin{eqnarray}\label{edmd_robust_convexPF} \nonumber
&&\min\limits_{\bf K}\max_{\delta{\bf G}\in \bar \Delta}\parallel ({\bf G}+\delta {\bf G}){\bf K}-{\bf A}\parallel_F\nonumber\\ \nonumber
&&{\rm s.t.}\quad{\bf K}_{ij}\geq 0\\ \nonumber
&&\qquad [\Lambda {\bf K}\Lambda^{-1}]_{ij}\geq 0\\
&&\qquad \Lambda{\bf K}\Lambda^{-1}\mathds{1}=\mathds{1}
\end{eqnarray}
where $\Lambda = \langle\boldsymbol{\Psi}(x),\boldsymbol{\Psi}(x)\rangle$ with $[\Lambda]_{ij}=\langle\psi_i,\psi_j\rangle$ is symmetric positive definite  matrix.
We refer the interested reader to \cite{Umesh_NSDMD} for details of NSDMD formulation.
Using duality the  robust approximation of the P-F operator, $\bf P$, can then be written as $\bf P={\bf K}^\top$. Most common approach for solving the robust optimization problem is by using a robust counterpart. In the following  section we show that the robust counterpart of the robust optimization problem can be constructed and is a convex optimization problem. 

\subsection{Robust Optimization, Regularization, and Sparsity}
The robust optimization problem (\ref{edmd_robust_convex}) has some interesting connection with optimization problems involving regularization term. In particular, we have following Theorem.

\begin{theorem}
Following two optimization problems 
\begin{eqnarray}
\min\limits_{\bf K}\max_{{\delta {\bf G}:}\parallel \delta{\bf G}\parallel_F\leq \lambda}\parallel ({\bf G}+\delta {\bf G}){\bf K}-{\bf A}\parallel_F \label{ocp}
\end{eqnarray}
\begin{eqnarray}\label{rob_eqv}
\min\limits_{\bf K}\parallel {\bf G}{\bf K}-{\bf A}\parallel_F+\lambda \parallel {\bf K}\parallel_F\label{regular}
\end{eqnarray}
are equivalent.
\end{theorem}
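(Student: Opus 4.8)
The plan is to follow the classical robust least-squares argument of El~Ghaoui and Lebret, adapted to the matrix (Frobenius) setting. The outer minimization over ${\bf K}$ can be set aside at first: it suffices to evaluate, for an arbitrary fixed ${\bf K}$, the inner maximization
\[
\Phi({\bf K}):=\max_{\|\delta{\bf G}\|_F\le\lambda}\|({\bf G}+\delta{\bf G}){\bf K}-{\bf A}\|_F,
\]
and to show that $\Phi({\bf K})=\|{\bf G}{\bf K}-{\bf A}\|_F+\lambda\|{\bf K}\|_F$. If this identity holds pointwise in ${\bf K}$, then taking the minimum of both sides over ${\bf K}$ immediately yields the equivalence of (\ref{ocp}) and (\ref{regular}). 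Throughout I write ${\bf R}:={\bf G}{\bf K}-{\bf A}$ for the nominal residual, so that the perturbed objective reads $\|{\bf R}+\delta{\bf G}\,{\bf K}\|_F$.

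First I would establish the upper bound $\Phi({\bf K})\le\|{\bf R}\|_F+\lambda\|{\bf K}\|_F$. By the triangle inequality, $\|{\bf R}+\delta{\bf G}\,{\bf K}\|_F\le\|{\bf R}\|_F+\|\delta{\bf G}\,{\bf K}\|_F$, and by submultiplicativity of the Frobenius norm together with the constraint $\|\delta{\bf G}\|_F\le\lambda$ one gets $\|\delta{\bf G}\,{\bf K}\|_F\le\|\delta{\bf G}\|_F\|{\bf K}\|_F\le\lambda\|{\bf K}\|_F$. This bound is uniform over the admissible $\delta{\bf G}$ and therefore bounds the maximum, giving one of the two inequalities; after minimizing over ${\bf K}$ it already shows that the optimal value of the robust problem (\ref{ocp}) never exceeds that of the regularized problem (\ref{regular}).

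The reverse inequality — exhibiting an admissible perturbation that attains the bound — is the crux of the argument. The idea is to choose $\delta{\bf G}$ so that the two inequalities above become simultaneously tight: $\delta{\bf G}\,{\bf K}$ must be a nonnegative multiple of the residual ${\bf R}$ (to saturate the triangle inequality), while $\|\delta{\bf G}\|_F=\lambda$ and the product must meet the submultiplicative bound. When ${\bf R}\neq 0$ and ${\bf K}\neq 0$ one constructs a rank-structured $\delta{\bf G}$ aligned with ${\bf R}/\|{\bf R}\|_F$ and acting on the range of ${\bf K}$; the degenerate cases ${\bf R}=0$ (where the value $\lambda\|{\bf K}\|_F$ is recovered directly) and ${\bf K}=0$ (where both sides vanish) are treated separately. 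I expect this attainment step to be the main obstacle, because the matrix structure couples all columns of ${\bf K}$ through the single shared perturbation $\delta{\bf G}$, so the clean rank-one construction available in the vector case of El~Ghaoui–Lebret must be carried out with care in order to saturate $\|{\bf K}\|_F$. Once an attaining $\delta{\bf G}$ is produced, $\Phi({\bf K})=\|{\bf R}\|_F+\lambda\|{\bf K}\|_F$ follows, and minimizing over ${\bf K}$ completes the proof.
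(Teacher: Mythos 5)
Your strategy is the same as the paper's (the paper outlines exactly this El~Ghaoui--Lebret argument from its cited reference): fix ${\bf K}$, evaluate the inner maximum $\Phi({\bf K})$, obtain the upper bound $\parallel {\bf R}\parallel_F + \lambda \parallel {\bf K}\parallel_F$ from the triangle inequality plus submultiplicativity, and then exhibit an admissible perturbation attaining it. The upper-bound half of your argument is correct, and so is the conclusion you draw from it, namely that the optimal value of (\ref{ocp}) never exceeds that of (\ref{regular}). The gap is precisely the attainment step that you defer (``once an attaining $\delta{\bf G}$ is produced\dots''), and your suspicion about the coupling of the columns of ${\bf K}$ through the single shared perturbation is not a technicality to be handled ``with care'': in the matrix setting that step is impossible in general, so the pointwise identity $\Phi({\bf K})=\parallel {\bf R}\parallel_F+\lambda\parallel {\bf K}\parallel_F$ you are aiming for is false.

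The reason is that the bound $\parallel \delta{\bf G}\,{\bf K}\parallel_F\leq \parallel \delta{\bf G}\parallel_F\parallel {\bf K}\parallel_F$ is not sharp: one always has $\parallel \delta{\bf G}\,{\bf K}\parallel_F\leq \parallel \delta{\bf G}\parallel_F\parallel {\bf K}\parallel_2$, where $\parallel\cdot\parallel_2$ is the spectral norm, and this bound is the exact supremum. Hence $\Phi({\bf K})\leq \parallel {\bf R}\parallel_F+\lambda\parallel {\bf K}\parallel_2$, and saturating $\lambda\parallel {\bf K}\parallel_F$ would force $\parallel {\bf K}\parallel_2=\parallel {\bf K}\parallel_F$, i.e.\ $\mathrm{rank}({\bf K})\leq 1$; no $\delta{\bf G}$ can make $\delta{\bf G}\,{\bf K}$ simultaneously aligned with ${\bf R}$ and of Frobenius norm $\lambda\parallel {\bf K}\parallel_F$ when ${\bf K}$ has rank two or more. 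Concretely, take ${\bf K}=I$, the $K\times K$ identity: then $\Phi(I)=\max_{\parallel\delta{\bf G}\parallel_F\leq\lambda}\parallel ({\bf G}-{\bf A})+\delta{\bf G}\parallel_F=\parallel {\bf G}-{\bf A}\parallel_F+\lambda$, strictly smaller than the target value $\parallel {\bf G}-{\bf A}\parallel_F+\lambda\sqrt{K}$. The discrepancy even survives the outer minimization: with ${\bf G}={\bf A}=I$ and $0<\lambda\leq 1$, the robust problem (\ref{ocp}) has value at most $\lambda$ (take ${\bf K}=I$), while ${\bf K}=I$ is a global minimizer of the convex objective in (\ref{regular}) (one checks $0\in\partial\bigl(\parallel {\bf K}-I\parallel_F+\lambda\parallel {\bf K}\parallel_F\bigr)$ at ${\bf K}=I$), giving value $\lambda\sqrt{K}>\lambda$. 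So the equivalence itself fails for $K\geq 2$. The identity you want is a genuine theorem when the coefficient variable is a single column vector --- that is the setting of El~Ghaoui--Lebret and of the robust-optimization survey the paper cites, and there your rank-one alignment construction works verbatim; restoring it for matrix ${\bf K}$ requires either columnwise-bounded uncertainty (which is what the paper's second theorem uses to get $\ell_1$ regularization) or treating each column of ${\bf A}$ and ${\bf K}$ as a separate vector least-squares problem. Be aware that the paper's own one-line proof asserts exactly the same alignment and therefore has the same flaw; that observation, however, does not close the gap in your argument.
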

\begin{proof}
The following proof is from \cite{caramanis201214}, and we outline here for the convenience of the reader. 
For any $\delta \textbf{G} \in \bar \Delta$, such that $\parallel \delta \textbf{G} \parallel_F \leq \lambda$, $\parallel (\textbf{G} + \delta \textbf{G})\textbf{K} - \textbf{A}\parallel_F = \parallel(\textbf{GK} + \delta \textbf{GK} -\textbf{A}) \parallel_F$. By triangle inequality, using the fact that $\parallel \delta \textbf{G}\parallel_F \leq \lambda$, we have $\parallel (\textbf{G} + \delta \textbf{G})\textbf{K} - \textbf{A}\parallel_F \leq \parallel {\bf GK - A}\parallel_F + \lambda\parallel{\bf K}\parallel_F$. On the other hand, given any ${\bf K}$, we can choose a $\delta {\bf G}$, such that $\delta {\bf GK}$ is aligned with $({\bf GK} - {\bf A})$ and thus the two problems are equivalent. 
\end{proof}

The regularization term penalize the Frobenius norm of the matrix $\bf K$ and is also known as Tikhonov regularization. Equivalence to the more popular $\ell_1$ regularization or Lasso regularization can also be shown if we change the structure of the uncertainty set. Note that in the above theorem the uncertainty set $\bar \Delta$ is defined as follows:
\[\bar \Delta:=\{\delta {\bf G}\in \mathbb{R}^{K\times K}: \parallel {\delta \bf G}\parallel \leq \lambda\}\]
The equivalence between the robust optimization problem (\ref{edmd_robust_convex}) and the $\ell_1$ Lasso regularization can be established as follows.
\begin{theorem}
Define
\[\bar \Delta:=\{{\delta \bf G }=(\delta {\bf G}_1,\ldots, \delta{\bf G}_K)\in \mathbb{R}^{K\times K}: \parallel \delta{\bf G}_i\parallel_2\leq c\}.\]
Following two optimization problems are equivalent
\begin{eqnarray}
\min\limits_{\bf K}\max_{\delta {\bf G}\in \bar \Delta}\parallel ({\bf G}+\delta {\bf G}){\bf K}-{\bf A}\parallel_F
\end{eqnarray}
\begin{eqnarray}
\min\limits_{\bf K}\parallel {\bf G}{\bf K}-{\bf A}\parallel_F+c \sum_{k=1}^K\parallel {\bf K}_k\parallel_1\label{regular1}
\end{eqnarray}
where ${\bf K}_k $ is the $k^{th}$ column of matrix $\bf K$. 
\end{theorem}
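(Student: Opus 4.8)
The plan is to mirror the proof of the preceding (Tikhonov) theorem but to exploit the column structure that the new uncertainty set $\bar\Delta$ imposes. First I would write everything column by column. Writing $K_k$ and $A_k$ for the $k$-th columns of ${\bf K}$ and ${\bf A}$, the Frobenius objective splits as $\parallel ({\bf G}+\delta{\bf G}){\bf K}-{\bf A}\parallel_F^2 = \sum_{k=1}^K \parallel ({\bf G}+\delta{\bf G})K_k - A_k\parallel_2^2$, while the defining constraint of $\bar\Delta$, namely $\parallel \delta{\bf G}_i\parallel_2\le c$, bounds the columns of $\delta{\bf G}$ one at a time. The purpose of this reduction is that each column becomes a vector robust least-squares problem of exactly the type treated in \cite{caramanis201214}.

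For the upper bound I would apply the triangle inequality column-wise: $\parallel ({\bf G}+\delta{\bf G})K_k - A_k\parallel_2 \le \parallel {\bf G}K_k - A_k\parallel_2 + \parallel \delta{\bf G} K_k\parallel_2$. Expanding $\delta{\bf G}K_k = \sum_i (K_k)_i\,\delta{\bf G}_i$ and using $\parallel \delta{\bf G}_i\parallel_2\le c$ gives $\parallel \delta{\bf G}K_k\parallel_2 \le \sum_i |(K_k)_i|\,\parallel \delta{\bf G}_i\parallel_2 \le c\parallel K_k\parallel_1$. This is the step that converts the $2$-norm bound on the columns of the perturbation into the $\ell_1$ (Lasso) penalty on the columns of ${\bf K}$, yielding $\max_{\delta{\bf G}\in\bar\Delta}\parallel ({\bf G}+\delta{\bf G})K_k - A_k\parallel_2 \le \parallel {\bf G}K_k - A_k\parallel_2 + c\parallel K_k\parallel_1$.

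For the reverse inequality I would, for each column, construct an adversarial perturbation attaining the bound: take $\delta{\bf G}$ to be the rank-one matrix $u_k s_k^\top$ with $u_k = ({\bf G}K_k - A_k)/\parallel {\bf G}K_k - A_k\parallel_2$ and $(s_k)_i = c\,\mathrm{sign}((K_k)_i)$, so that each column of $\delta{\bf G}$ has norm $c$ and $\delta{\bf G}K_k = c\parallel K_k\parallel_1\, u_k$ is aligned with the residual; this reproduces the alignment argument of the previous theorem. Summing the resulting per-column equalities would then produce the regularized problem (\ref{regular1}).

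The hard part, and the step I expect to be the real obstacle, is that a single $\delta{\bf G}$ is shared across all $K$ columns, whereas the alignment construction above wants a different rank-one perturbation for each column; moreover the Frobenius norm aggregates the column residuals in $\ell_2$ rather than $\ell_1$, so the naive triangle bound first gives only $c\sqrt{\sum_k \parallel K_k\parallel_1^2}$ before the lossy step $\sqrt{\sum_k\parallel K_k\parallel_1^2}\le \sum_k\parallel K_k\parallel_1$. Reconciling the shared adversary and this $\ell_2$-versus-$\ell_1$ aggregation with the claimed additive penalty $c\sum_k\parallel K_k\parallel_1$ is the delicate point; the clean way to make the equivalence exact is to regard the outer $\min$–$\max$ as separating into $K$ independent column problems, each reducing via the vector robust-least-squares equivalence of \cite{caramanis201214} to $\min_{K_k}\parallel {\bf G}K_k - A_k\parallel_2 + c\parallel K_k\parallel_1$, and then to sum the regularized column problems.
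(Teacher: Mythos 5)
You deserve credit for flagging the two real obstacles yourself---the single adversary $\delta{\bf G}$ shared across all columns, and the fact that the Frobenius norm aggregates column residuals in $\ell_2$ while the penalty in (\ref{regular1}) is additive---but your proposed resolution, namely ``regard the outer $\min$--$\max$ as separating into $K$ independent column problems,'' is precisely the step that fails, and you offer no justification for it. The inner maximization does not decompose: one only has
$\max_{\delta{\bf G}\in\bar\Delta}\sum_{k}\|({\bf G}+\delta{\bf G}){\bf K}_k-{\bf A}_k\|_2^2\le\sum_{k}\max_{\delta{\bf G}\in\bar\Delta}\|({\bf G}+\delta{\bf G}){\bf K}_k-{\bf A}_k\|_2^2$,
and equality would require a single perturbation that is simultaneously worst-case for every column. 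Your own rank-one construction shows why this generically cannot happen: the per-column maximizer needs columns $c\,\mathrm{sign}\bigl(({\bf K}_k)_i\bigr)u_k$, so both the alignment direction $u_k$ and the sign pattern vary with $k$, and distinct columns demand incompatible adversaries. Moreover, even if separation held, summing the per-column identities would produce the fidelity term $\sum_k\|{\bf G}{\bf K}_k-{\bf A}_k\|_2$, which is not $\|{\bf G}{\bf K}-{\bf A}\|_F$; this is exactly the $\ell_2$-versus-$\ell_1$ aggregation mismatch you noticed, and it never gets resolved. What your argument actually establishes is only the one-sided bound $\max_{\delta{\bf G}\in\bar\Delta}\|({\bf G}+\delta{\bf G}){\bf K}-{\bf A}\|_F\le\|{\bf G}{\bf K}-{\bf A}\|_F+c\sum_{k=1}^K\|{\bf K}_k\|_1$, which by itself does not show that the two minimization problems have the same optimal values or the same minimizers.

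For comparison, the paper itself does not prove this theorem at all: it simply refers the reader to \cite{caramanis201214}. What that reference proves is the exact identity in the vector (single right-hand side) case, $\max_{\|\delta{\bf G}_i\|_2\le c}\|({\bf G}+\delta{\bf G}){\bf k}-{\bf a}\|_2=\|{\bf G}{\bf k}-{\bf a}\|_2+c\|{\bf k}\|_1$, where the alignment construction works because there is only one residual direction to match. The Frobenius, multi-column statement in the theorem is a genuine extension of that result, and the two obstructions you identified are exactly why it does not follow from the vector case; closing the gap would require either restating the theorem with a per-column adversary (or a column-wise additive residual), or supplying a new argument for the matching lower bound under the shared adversary---your proposal as written does neither.
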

Refer to \cite{caramanis201214} for the proof.
It is well known that the optimization problem with the regularization term especially the $\ell_1$ Lasso type regularization induce sparsity structure on the optimal solution. The equivalence between robust optimization and regularized optimization problem provides an alternate point of view to the sparsity structure, i.e., {\it robustness implies sparsity}. In \cite{jovanovic2012low}, sparse DMD algorithm is proposed where $\ell_1$ type regularization term is used to impose sparsity structure on the amplitude terms which appear in the temporal expansion of data along dynamic modes. We expect similar robust optimization-based viewpoint can be provided to the sparse DMD algorithm proposed in \cite{jovanovic2012low}.

\section{Design of robust predictor}\label{section_predictor}

While the optimization problem with the regularization term (\ref{regular1}) and the robust optimization problem (\ref{ocp}) are equivalent, the robust optimization point of view to the optimization problem offers a particular advantage. In particular, robust optimization viewpoint provides a systematic way of determining the regularization parameter which often is a tuning parameter. On the other hand, the optimization formulation with the regularization term has interesting interpretation borrowed from machine learning literature.  In problems involving model fitting from data one of the fundamental tradeoff arise between the quality of approximation and complexity of approximation function. For example, in the absence of the regularization term, the optimization problem will find a matrix $\bf K$ that best tries to fit the training data to the model.  Hence, without the regularization term, there will be a tendency to over-fit model parameters to data. Such over-fitted model will perform well on the training data and give a smaller value of optimal cost. However, these over-fitted models will often perform poorly on the test data-set. On the other hand,  the optimization problem with the regularization term tries to strike a balance between over fitting and prediction. This observation on the role of regularization term has an important implication on the proposed application of transfer operator framework for the design of data-driven predictor. For the design of predictor dynamics, we first use training data for the approximation of the transfer Koopman operator. Let $\{x_0,\ldots,x_M\}$ be the training data-set and $\bf K$  be the finite-dimensional approximation of the transfer Koopman operator obtained using the robust algorithm (\ref{edmd_robust_convex}). Let $\bar x_0$ be the initial condition from which we want to predict the future. The initial condition from state space is mapped to the feature space using the same choice of basis function used in the robust approximation of Koopman operator i.e., \[\bar x_0\implies {\bf \Psi}(\bar x_0)^\top=: {\bf z}\in \mathbb{R}^K.\] This initial condition is propagated using Koopman operator as \[{\bf z}_n={\bf K}^n{\bf z}.\]
The predicted trajectory in the state space is then obtained as 
\[\bar x_n=C {\bf z}_n\]
where matrix $C$ is obtained as the solution of the following least square problem
\begin{eqnarray}\label{C_pred}
\min_C\sum_{i = 1}^M \parallel x_i - C \boldsymbol \Psi (x_i)\parallel_2^2
\end{eqnarray}
In the simulation section, we demonstrate the effectiveness of the proposed robust prediction algorithm on linear and nonlinear systems.

\section{Simulation Results}\label{section_simulation}
In this simulation section we compare our proposed robust optimization-based approach for the approximation of transfer operator with existing approaches namely subspace DMD, and regular DMD. Hence for the purpose of completeness we explain in brief the subspace DMD method for approximation from  \cite{PhysRevE.96.033310}. 

Consider the random dynamical system (\ref{system}) with noisy observables $h : X\times S \to \mathbb{C}^K$ :
\begin{eqnarray}
h(x_t,s_t) = {\bf \Psi}(x_t) + w(s_t)
\end{eqnarray} 
where $w:S\to\mathbb{C}^K$ is a random variable on a probability space $(S,\Sigma_S,\mu_S)$ of the observation noise.
Consider the data matrix as a concatenation of $m$ observations as
\begin{eqnarray}
Y_t = [\begin{matrix}
h(x_t) & \cdots & h(x_{t+m-1})
\end{matrix}]
\end{eqnarray}
%
Let $(Y_0,Y_1,Y_2,Y_3)$ be a quadruple formed from the data set, such that 
\begin{eqnarray*}
&&Y_0 = [h(x_0)\quad h(x_1)\quad \cdots \quad h(x_{m-1})]\\
&&Y_1 = [h(x_1)\quad h(x_2)\quad \cdots\quad h(x_{m})]\\
&&Y_2 = [h(x_2)\quad h(x_3)\quad\cdots \quad h(x_{m+1})]\\
&&Y_3 = [h(x_3)\quad h(x_4)\quad \cdots \quad h(x_{m+2})]\\
\end{eqnarray*}

Define $Y_p, Y_f$ as 
\begin{eqnarray}
Y_p = [Y_0^\top \quad Y_1^\top]^\top ; \quad Y_f = [Y_2^\top \quad Y_3^\top]^\top
\end{eqnarray}

Then the Subspace DMD algorithm \cite{PhysRevE.96.033310} is as follows :
\begin{enumerate}
\item{Build the data sets $Y_p$ and $Y_f$ from the data set.}
\item{Compute the orthogonal projection of rows of $Y_f$ onto the row space of $Y_p$ as $O = Y_f\mathbb{P}_{Y_p^{\mathsf{H}}}$.}
\item{Compute the compact SVD $O = U_qS_qV_q^{\mathsf{H}}$ and define $U_{q_1}$ and $U_{q_2}$ by the first and last $K$ rows of $U_q$ respectively.}
\item{Compute the compact SVD $U_{q_1}=USV^{\mathsf{H}}$ and define $\tilde{A}=U^{\mathsf{H}}Y_fVS^{-1}$.}
\item{Compute the eigenvalues $\lambda$ and  eigenvectors $\tilde{w}$ of $\tilde{A}$.}
\item{Return the dynamic modes $w=\lambda^{-1}U_{q_2}VS^{-1}\tilde{w}$ and corresponding eigenvalues $\lambda$.}
\end{enumerate}

\subsection{IEEE 9 bus system}
In this section, we consider the IEEE 9 bus system, the line diagram of which is shown in Fig. \ref{9_bus_fig}. The model used is based on the modeling described in \cite{Sauer_pai_book}.

\begin{figure}[htp!]
\centering
\includegraphics[scale=.5]{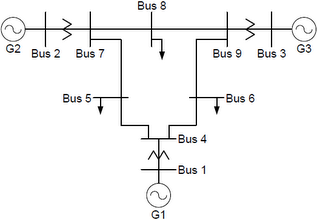}
\caption{IEEE 39 bus system.}\label{9_bus_fig}
\end{figure}
The power network is described by a set of differential algebraic equations (DAE) and the power system dynamics is divided into three parts: differential equation model describing the generator and load dynamics, algebraic equations at the stator of the generator and algebraic equations describing the network power flow. We consider a power system model with $n_g$ generator buses and $n_l$ load buses. The generator dynamics at each generator bus can be represented as a $4^{th}$ order dynamical model:

{\small
\begin{eqnarray*}\label{generator_dynamic_eq}
\begin{aligned}
&\frac{d\delta_i}{dt}  = \omega_i - \omega_s \\
&\frac{d\omega_i}{dt}  = \frac{T_{m_i}}{M_i} - \frac{E_{q_i}^{\prime} I_{q_i}}{M_i} - \frac{(X_{q_i} - X_{d_i}^{\prime})}{M_i} I_{d_i} I_{q_i} - \frac{D_i (\omega_i-\omega_s)}{M_i} \\
&\frac{d E_{q_i}^{\prime}}{dt}  = -\frac{E_{q_i}^{\prime}}{T_{{do}_i}^{\prime}} - \frac{(X_{d_i} - X_{d_i}^{\prime})}{T_{{do}_i}^{\prime}} I_{d_i} + \frac{E_{{fd}_i}}{T_{{do}_i}^{\prime}} \\
&\frac{dE_{{fd}_i}}{dt}  = -\frac{E_{{fd}_i}}{T_{A_i}} + \frac{K_{A_i}}{T_{A_i}} (V_{{ref}_i} - V_i) 
\end{aligned}
\end{eqnarray*}}
The algebraic equations at the stator of the generator are: 
{\small
\begin{align}\label{stator_algebraic_eq}
\begin{split}
& V_i \sin(\delta_i - \theta_i) + R_{s_i} I_{d_i} - X_{q_i} I_{q_i}  = 0 \\
& E_{q_i}^{\prime} - V_i \cos(\delta_i - \theta_i) - R_{s_i} I_{q_i} - X_{d_i}^{\prime} I_{d_i}  = 0 \\
& \qquad \text{for}\quad  i = 1,\dots, n_g.
\end{split}
\end{align}}
The network equations corresponding to the real and reactive power at generator and load buses are shown below. 
{\small
\begin{eqnarray}\label{network_eq}
\begin{aligned}
& I_{d_i} V_i \sin(\delta_i - \theta_i) + I_{q_i} V_i \cos(\delta_i - \theta_i) + P_{L_i} (V_i) \\ 
& - \sum_{k=1}^{\bar n} V_i V_k Y_{ik} \cos(\theta_i - \theta_k - \alpha_{ik}) = 0 \\ 
&  I_{d_i} V_i \cos(\delta_i - \theta_i) - I_{q_i} V_i \sin(\delta_i - \theta_i) + Q_{L_i} (V_i) \\ 
& - \sum_{k=1}^{\bar n} V_i V_k Y_{ik} \sin(\theta_i - \theta_k - \alpha_{ik}) = 0 \\
& \text{for} \;\ i = 1,\dots, n_g. \\
& P_{L_i}(V_i) - \sum_{k=1}^{\bar n} V_i V_k Y_{ik} \cos(\theta_i - \theta_k - \alpha_{ik}) = 0 \\
& Q_{L_i}(V_i) - \sum_{k=1}^{\bar n} V_i V_k Y_{ik} \sin(\theta_i - \theta_k - \alpha_{ik}) = 0 \\
& \text{for} \;\ i = n_g +1,\dots, n_g + n_l. 
\end{aligned}
\end{eqnarray}}
here, $\delta_i$, $\omega_i, E_{q_i}$, and $E_{{fd}_i}$ are the dynamic states of the generator and correspond to the generator rotor angle, the angular velocity of the rotor, the quadrature-axis induced emf and the emf of fast acting exciter connected to the generator respectively. The algebraic states $I_{d_i}$ and $I_{q_i}$ are the direct-axis and quadrature-axis currents induced in the generator respectively. Each bus voltage and its angle are denoted by $V_i$ and $\theta_i$. The parameters $T_{m_i}, V_{{ref}_i}, \omega_s, M_i$, and $D_i$ are the mechanical input and machine parameters applied to the generator shaft, reference voltage, rated synchronous speed, generator inertia, and internal damping. 
The stator internal resistance is denoted by $R_{s_i}$ and $X_{q_i}$, $X_{d_i}$, $X_{d_i}^{\prime}$ are the quadrature-axis salient reactance, direct-axis salient reactance and direct-axis transient reactance. The exciter gain and time-constant are given by $K_{A_i}$ and $T_{A_i}$.

A power system stabilizer (PSS), that acts as a local controller to the generator is designed based on the linearized DAEs. The input to the PSS controller is $ \omega_i(t)$ and PSS output, $V_{{ref}_i}(t)$, is fed to the fast acting exciter of the generator. An IEEE Type-I PSS is considered here which consists of a wash-out filter and two phase-lead filters. The transfer function of PSS is given by

\begin{align}
\frac{\Delta V_{{ref}_i}(s)}{\Delta \omega_i(s)} = k_{pss} \frac{(1+sT_{num})^2}{(1+sT_{den})^2} \frac{s T_w}{1+sT_w} 
\label{eq_pss}
\end{align} 
where $k_{pss}$ is the PSS gain, $T_w$ is the time constant of wash-out filter and $T_{num}, T_{den}$ are time constants of phase-lead filter with $T_{num} > T_{den}$. 

Elimination of the algebraic variables by Kron reduction, generates a reduced order dynamic model given by $\Delta \dot{x}_g  = {A_{gg}} \Delta x_g + E_1 \Delta \tilde{u}$ where $\Delta {x}_g \in \mathbb{R}^{{7n_g}}$ and $\Delta \tilde{u} \in \mathbb{R}^{{n_g}}$. 

In this example, there are three generators and we consider static loads and hence the dynamic model is of dimension 21. Data was collected over 25 time steps, with sampling time $\delta t = .2$ seconds. The obtained data was corrupted with uniform noise with support on $[-0.4,0.4]$. 
\begin{figure}[htp!]
\centering
\subfigure[]{\includegraphics[scale=.3]{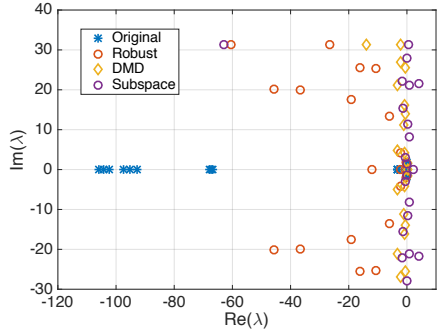}}
\subfigure[]{\includegraphics[scale=.3]{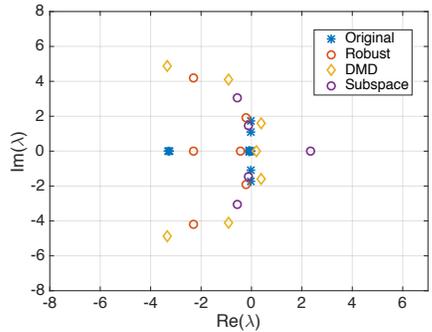}}
\caption{(a) Eigenvalues obtained using Ribust DMD, normal DMD and Subspace DMD. (b) Dominant eigenvalues.}\label{fig_eig_9_bus}
\end{figure}

In Fig. \ref{fig_eig_9_bus}(a) we compare the eigenvalues obtained using different methods, namely, Robust DMD, normal DMD and subspace DMD. In Fig. \ref{fig_eig_9_bus}(b) we show the dominant eigenvalues and find that the robust DMD performs much better, compared to other methods, and captures the dominant eigenvalues with more accuracy. Moreover, normal DMD and subspace DMD generates eigenvalues which are placed on the right half plane, implying the system to be unstable, though the original system is stable. Hence, we conclude that Robust DMD does a better job compared to the existing methods. In the above comparison, we had chosen linear dictionary functions. However, one can choose radial basis functions or Hermite polynomials as dictionary functions and can get similar results.

\subsection{Noisy rotations on a circle}
We consider the example discussed in \cite{mezic_stochastic_koopman_spectrum},\cite{junge2004uncertainty}. A dynamical system $T$, corresponding to rotation on the unit circle $S^1$ is given by 
\begin{equation}
T(x) = x + \theta
\end{equation}
where $\theta\in S^1$ is a constant number. We consider a stochastic dynamical system with process noise, so that the RDS is given by
\begin{equation}
T(x,\xi) = x + \theta + \xi
\end{equation}
where $\xi$ are independent and identically distributed random variables taking values in $[-0.7,0.7]$. In this example we considered $x(0)=1$, $\theta = \pi/320$. We consider the dictionary functions as 
\begin{equation}
\mathbf{\Psi} = e^{2\pi i n x}
\end{equation}
where $n = -50,-49, \cdots , 49, 50$. Data was collected for $6000$ time steps and data for the first $50$ time steps was used for training purpose. 

\begin{figure}[htp!]
\centering
\subfigure[]{\includegraphics[scale=.255]{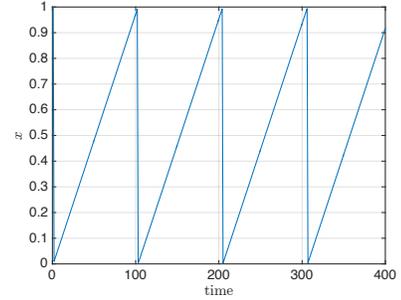}}
\subfigure[]{\includegraphics[scale=.25]{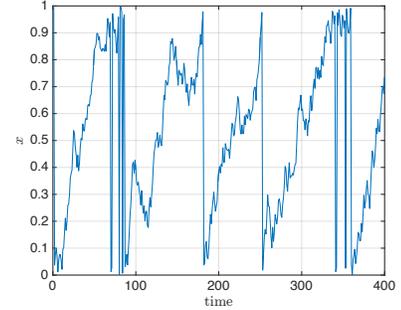}}
\subfigure[]{\includegraphics[scale=.28]{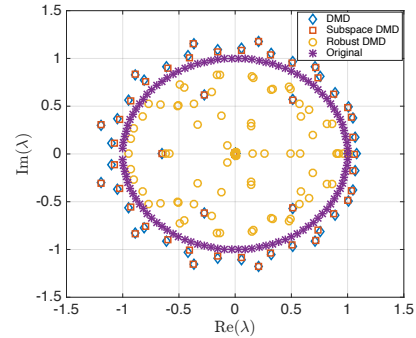}}
\caption{(a) Clean data. (b) Noisy data. (c) Comparison of eigenvalues obtained with different algorithms.}\label{fig_rotation}
\end{figure}
The clean data and the noisy data is shown in Fig. \ref{fig_rotation}(a) and (b) respectively. The eigenvalues obtained using the different algorithms is plotted in Fig. \ref{fig_rotation}(c) and it can be seen that the robust DMD algorithm provides closer match for the eigenvalues of the deterministic system. Moreover, normal DMD and subspace DMD yields unstable eigenvalues, though the original system is not unstable. As mentioned earlier, normal DMD and subspace DMD works for RDS when the training data set is large enough, but in reality it may not be possible to obtain such large data sets and in that case robust DMD provides much better approximation of the Koopman operator.

\subsection{Stuart-Landau equation}
The nonlinear stochastic Stuart-Landau system \cite{PhysRevE.96.033310}. The stochastic Stuart-Landau equation on a complex function $z(t) = r(t)\exp (i\theta(t))$ is given by 
\begin{eqnarray}\label{stu_lan}
\dot{z} = (\mu + i\gamma)z - (1+ i\beta)|z|^2z + \sigma\xi(t),
\end{eqnarray}
where $\xi(t)$ is a white Gaussian noise of unit variance and $i$ is the imaginary unit. In absence of process noise, the solution of (\ref{stu_lan}) evolves on the limit cycle $|z| = \sqrt{\mu}$. Hence, the continuous time eigenvalues lie on the imaginary axis, if the process noise is absent. The discretized version of (\ref{stu_lan}) is

{\small
\begin{eqnarray}\label{stu_lan_dis}
\begin{pmatrix}
r_{t+1}\\
\theta_{t+1}
\end{pmatrix} = \begin{pmatrix}
r_t + (\mu r_t -r_t^3)\delta t\\
\theta_t + (\gamma - \beta r_t^2)\delta t
\end{pmatrix} + \sigma_p\begin{pmatrix}
\delta t & 0\\
0 & \delta t/ r_t
\end{pmatrix}\xi_t.
\end{eqnarray}
}
We assume that the observation are corrupted with noise and of the form
\begin{eqnarray}
y_t = \begin{pmatrix}
e^{-10i\theta_t} & e^{-9i\theta_t} & \cdots & e^{9i\theta_t} & e^{10i\theta_t}
\end{pmatrix} + \sigma_o w_t.
\end{eqnarray}
The noisy output data was used to construct the finite dimensional approximation of the operator, with $\delta t = 0.01$. Both the process noise and measuement noise considered here are uniform with support $[-.03,0.3]$ and $[-0.1,0.1]$ respectively. A sample trajectory is shown in Fig. \ref{data_nonlinear}.

\begin{figure}[htp!]
\centering
\includegraphics[scale=.275]{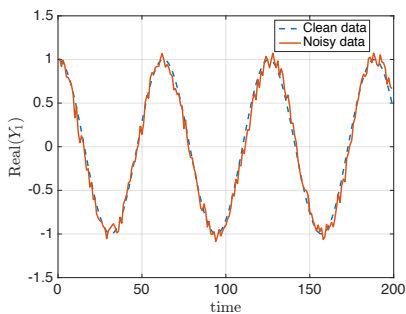}
\caption{Sample output trajectory}\label{data_nonlinear}
\end{figure}
\begin{figure}[htp!]
\centering
\subfigure[]{\includegraphics[scale=.275]{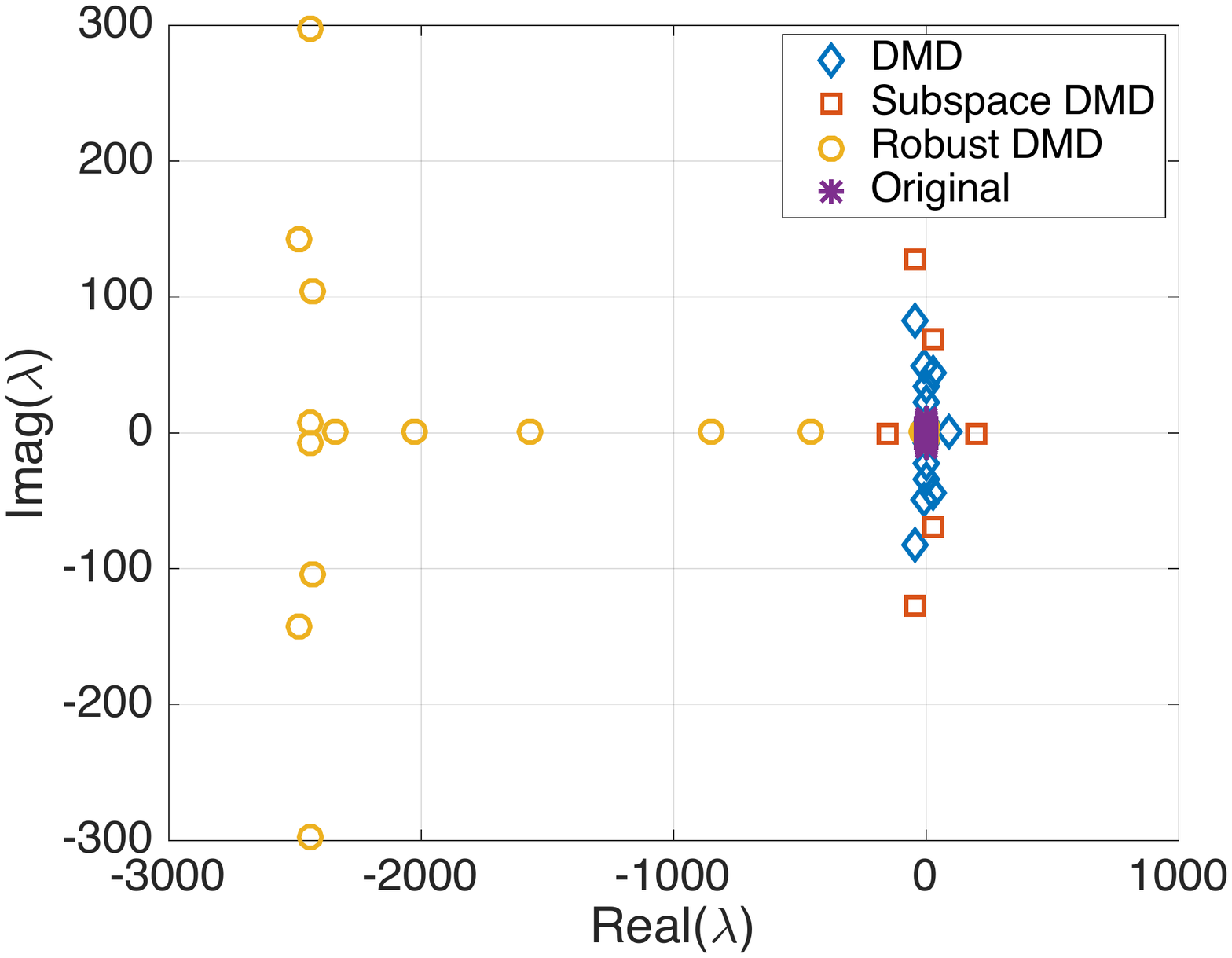}}
\subfigure[]{\includegraphics[scale=.275]{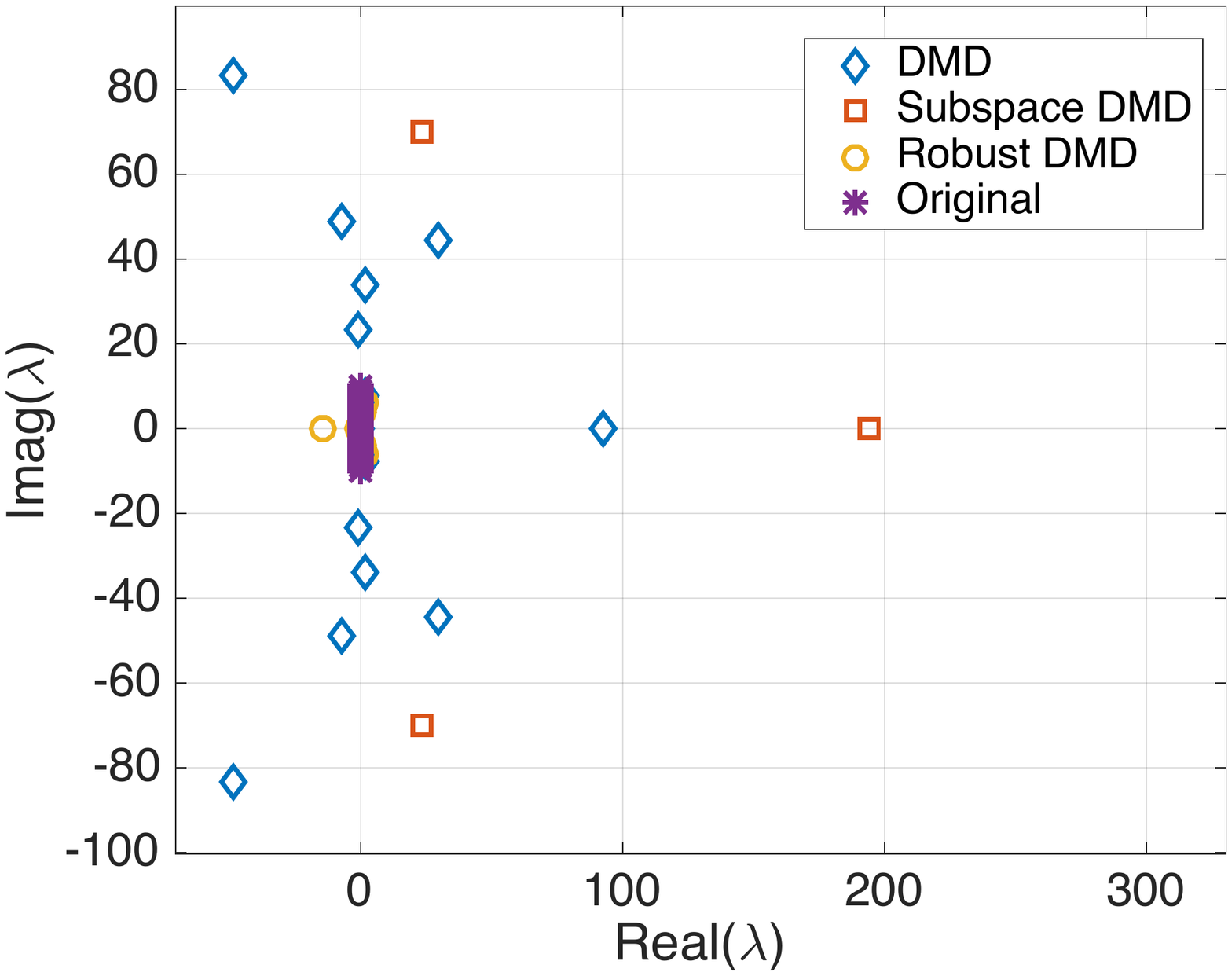}}
\caption{(a) Eigenvalue comparison of Robust DMD, normal DMD and subspace DMD. (b) Dominant eigenvalues.}\label{eig_nonlinear}
\end{figure}

\begin{figure}[htp!]
\centering
\subfigure[]{\includegraphics[scale=.275]{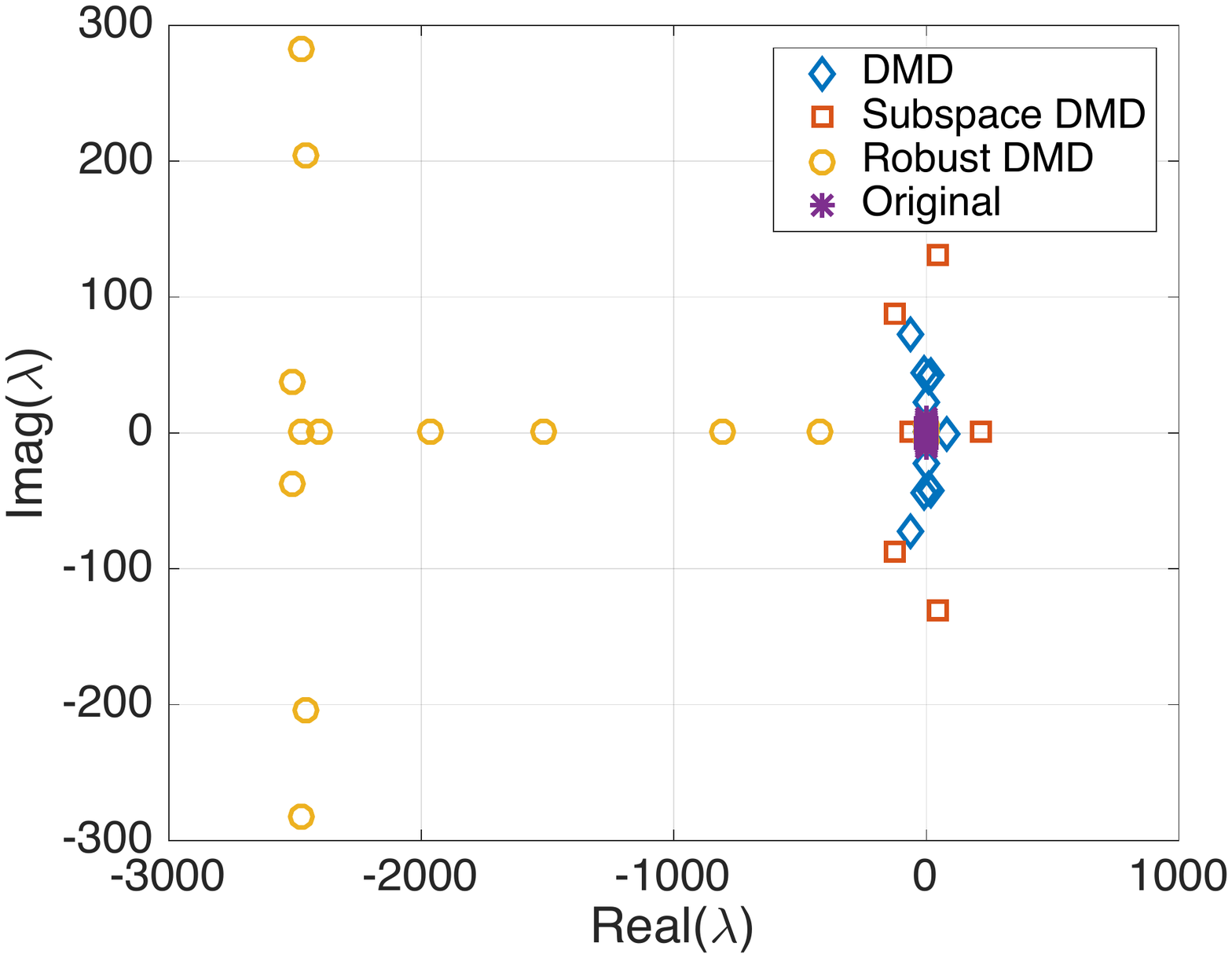}}
\subfigure[]{\includegraphics[scale=.278]{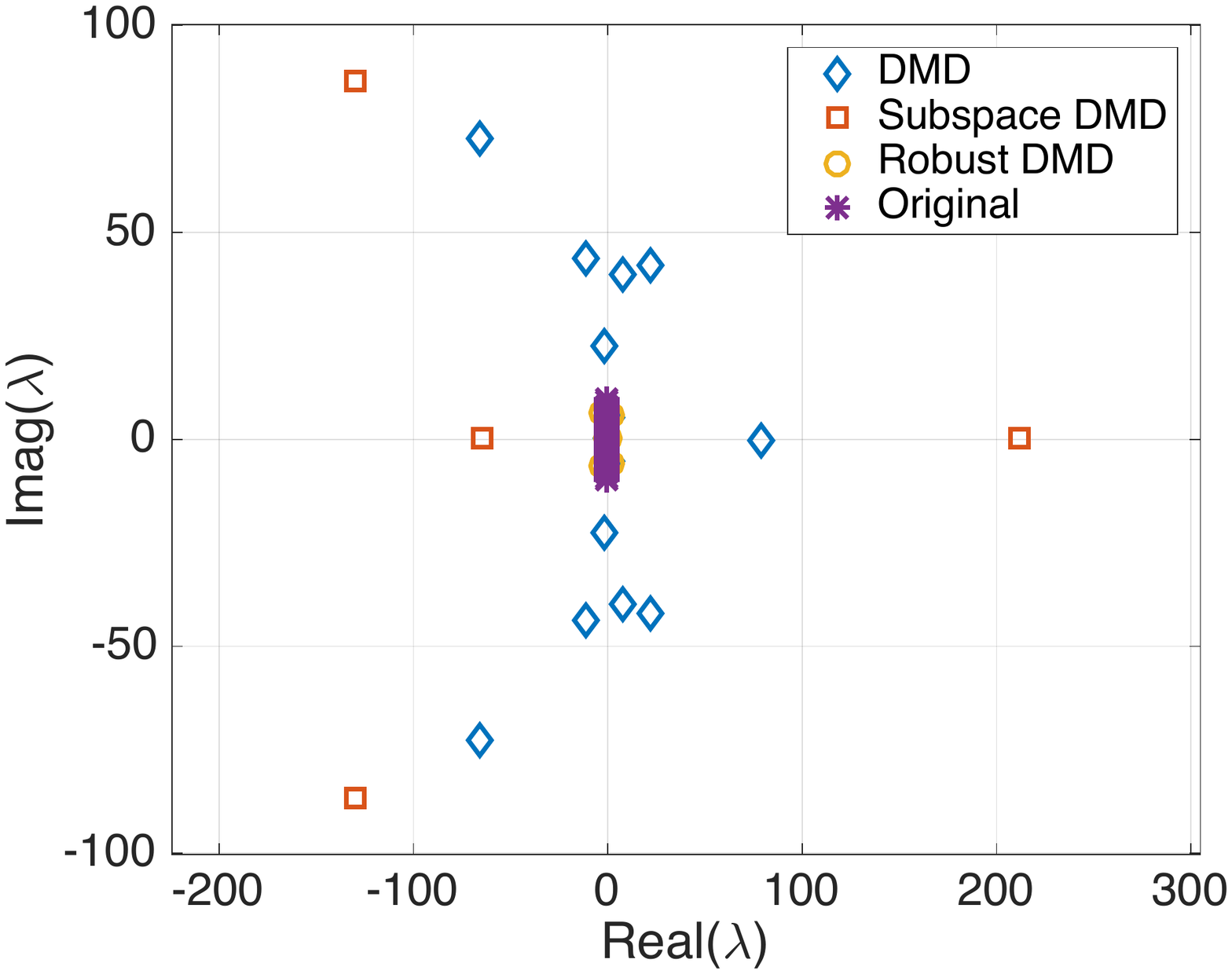}}
\caption{(a) Eigenvalue comparison of Robust DMD, normal DMD and subspace DMD. (b) Dominant eigenvalues.}\label{eig_nonlinear_point9}
\end{figure} 

The system was initialized at $(1,-\pi)$ and data was obtained for 100 time steps and data of the first 30 steps was used as the training data and the prediction was made over future steps.  The dictionary functions we chose are $e^{in\theta_t}$, $n=-10,-9,\cdots , 9,10$.  The eigenvalues of the approximate robust DMD Koopman operator, normal DMD Koopman and subspace DMD operator for the initial condition $(1,-\pi)$ are shown in Fig. \ref{eig_nonlinear}(a). It is observed that all the algorithms are capturing the oscillatory nature of the underlying system, but normal DMD and subspace DMD generate a Koopman operator which has an eigenvalue on the right half plane implying the system to be unstable. The first few dominant eigenvalues obtained using the three different algorithms are shown in Fig. \ref{eig_nonlinear}(b).  

\begin{figure}[htp!]
\centering
\subfigure[]{\includegraphics[scale=.275]{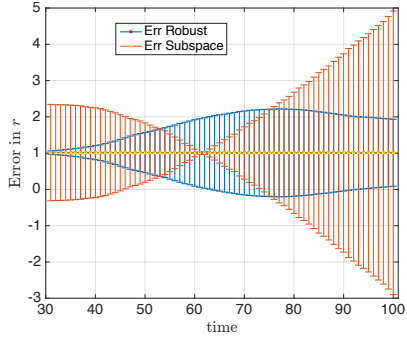}}
\subfigure[]{\includegraphics[scale=.275]{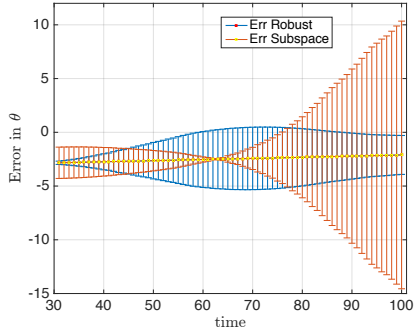}}
\caption{(a) Error in prediction in $r$ for initial state at $(1,-\pi)$. (b) Error in prediction in $\theta$ initial state at $(1,-\pi)$.}\label{err_nonlinear}
\end{figure}

Once the Koopman operators are obtained, one can use the obtained operator to predict the future. In Fig. \ref{err_nonlinear}(a) and (b), we compare the errors in prediction in $r$ and $\theta$ respectively, when using robust DMD and subspace DMD for the system starting from $(1,-\pi)$. The errors are plotted around the actual values of $r$ and $\theta$. For example, the error in $r$ is plotted around the actual value of $r=1$.  
As mentioned earlier, the first 30 time step data was used as the training data, and the operators obtained using the training data has been used to predict future 70 time steps. As can be seen from Fig. \ref{err_nonlinear}(a) and (b), robust DMD algorithm yields much smaller error, when compared to subspace DMD. The error in prediction using normal DMD increases exponentially and is not shown in the error plot. However, we find that the error using subspace DMD decreases initially and then grows exponentially. This is because the coupling from the unstable subspace to the stable subspace is small and hence initially the dominant stable eigenvalues makes the error decrease but as the system evolves, the effect of the unstable eigenvalue becomes more prominent and the error grows exponentially.



As mentioned earlier, compared to subspace DMD Robust DMD provides better approximation of systems, when the size of training data is not substantial. To illustrate this point, we used different sizes of training data set and used it to predict future 10 time steps. In particular, we varied the training data from 10 time steps to 40 time steps and looked at the average error in prediction in both $r$ and $\theta$ over 10 future time steps.  

\begin{figure}[htp!]
\centering
\subfigure[]{\includegraphics[scale=.33]{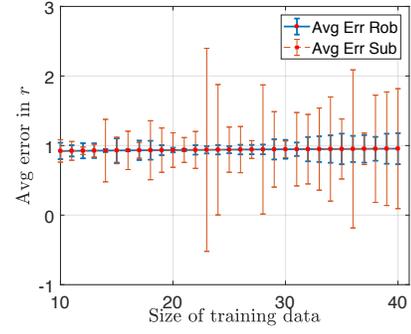}}
\subfigure[]{\includegraphics[scale=.33]{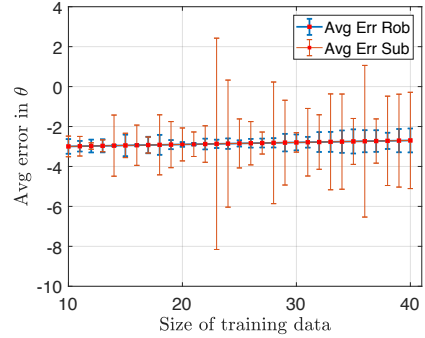}}
\caption{(a) Average error in prediction of $r$. (b) Average error in prediction of $\theta$.}\label{avg_err}
\end{figure}
In Fig. \ref{avg_err}(a) we plot the average error in prediction in $r$. The $x$-axis shows the number of time steps used for training and for each such training data set, the state $r$ was predicted for 10 future time steps and the average error in the prediction is plotted along the $y$-axis. It can be observed that for all the different sizes of training data used, average error for Robust DMD algorithm always shows smaller error compared to subspace DMD. Similarly, in Fig. \ref{avg_err}(b), we plot the average error is prediction of $\theta$ and observe the same.

\subsection{Stochastic Burger equation}
The third example we consider in this paper is  stochastic Burger's equation

\[\partial_t u(x,t)+u\partial_x u=k\partial_x^2u+\sigma_p e(x,t)\]

with $k=0.01$, $e(x,t)$ is uniform random distribution with support $[-1,1]$ and $\sigma_p=0.2$. In the simulation, we approximated the PDE solution using the Finite Difference method \cite{KUTLUAY1999251} with the initial condition $u(x,0)=sin(2\pi x)$ and Dirichet boundary condition $u(0,t)=u(1,t)=0$. Given the spatial and temporal ranges, $x\in[0,1],\;t\in[0,1]$, the discretizaion steps are chosen as $\Delta t=0.02$ and $\Delta x=1\times10^{-2}$.

The generated data is plotted in Fig. \ref{fig:PDE}(a). The robust DMD algorithm, subspace DMD algorithm and the regular DMD algorithm are applied to data set collected over 100 time steps. We assumed that both process and measurement noise and each has variance of 0.2. The eigenvalues obtained using regular DMD, subspace DMD and robust DMD approach are shown in Fig. \ref{fig:PDE})(b) and zoomed in plot shown in Fig. \ref{fig:PDE}(c). As in the previous examples, we find that one of the eigenvalues obtained using regular DMD and subspace DMD approach is unstable. Furthermore, the dominant eigenvalues show a closer match for robust DMD compared to other methods.


\begin{figure}[htp!]
\centering
\subfigure[\label{fig:PDE_data}]{\includegraphics[scale=.3]{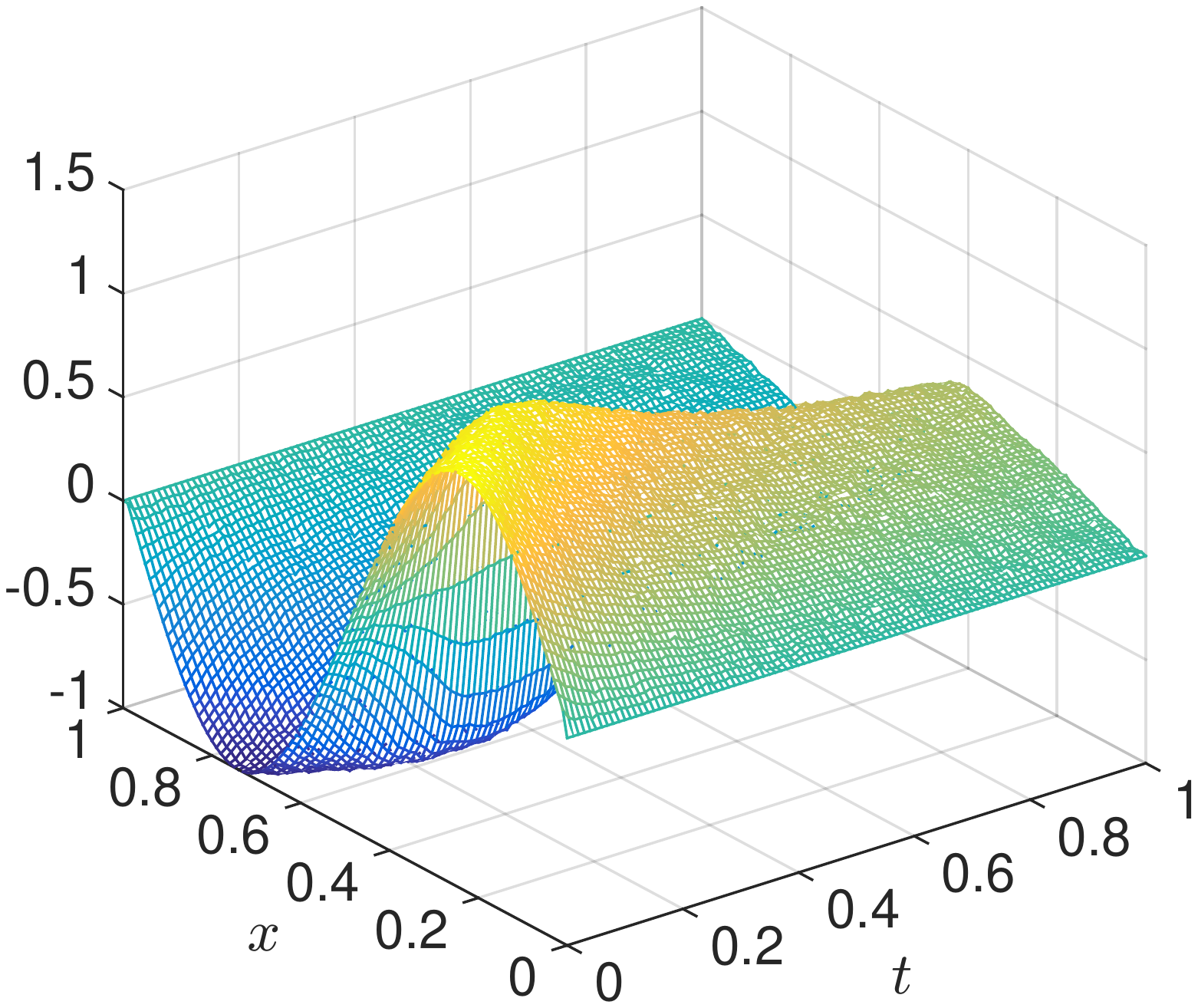}}
\subfigure[\label{fig:PDE_eigvalues}]{\includegraphics[scale=.275]{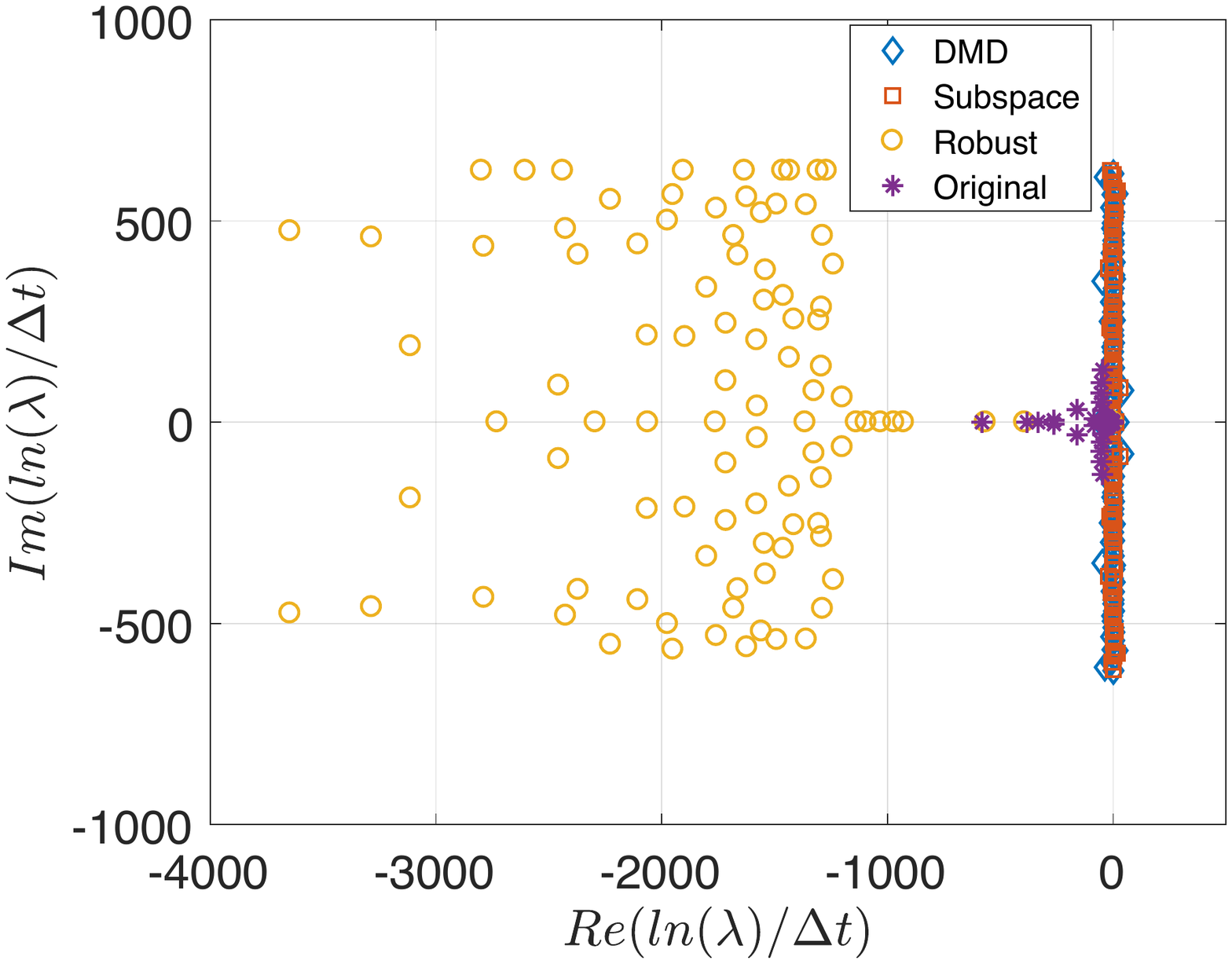}}
\subfigure[\label{fig:PDE_eigvalues2}]{\includegraphics[scale=.275]{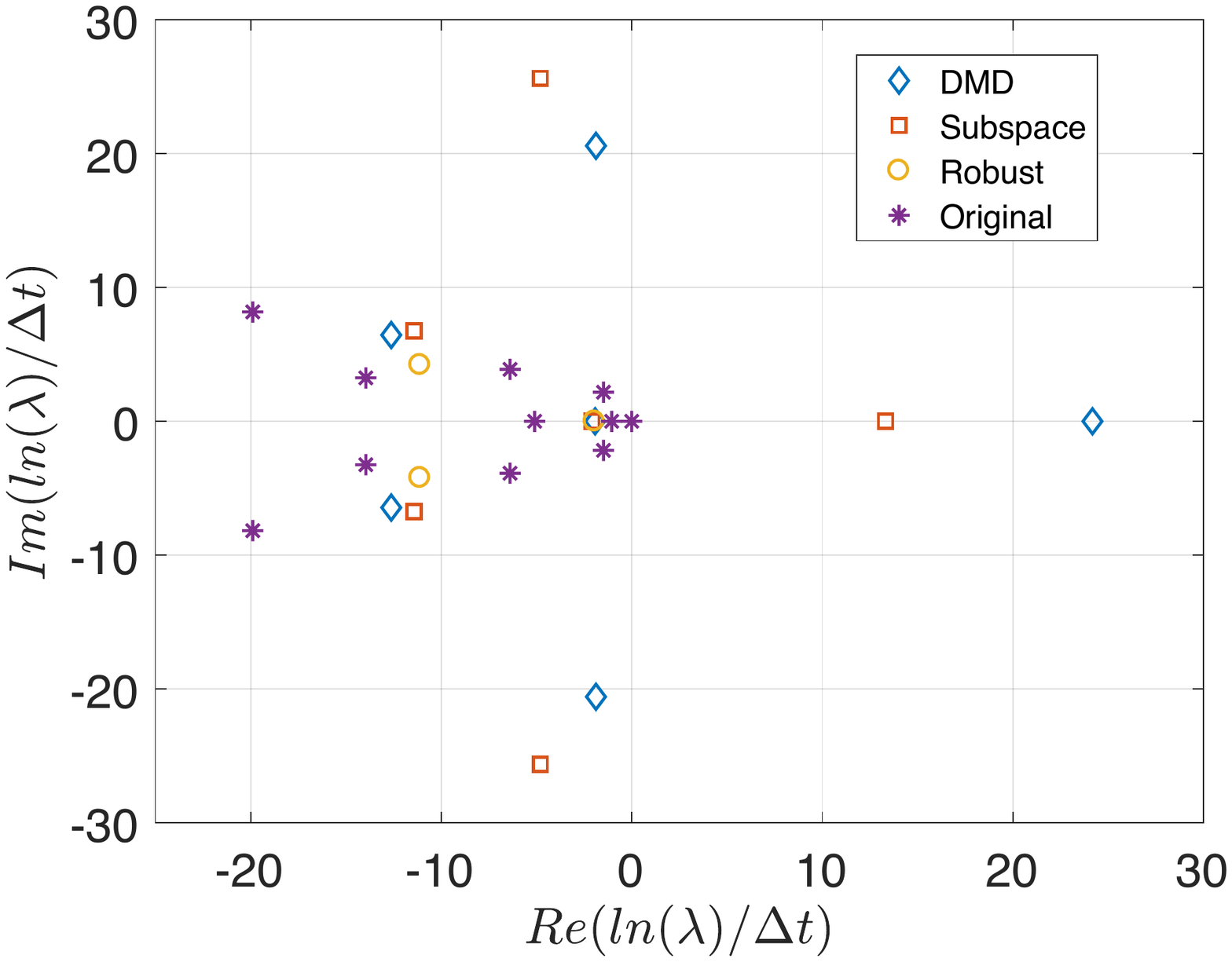}}
\caption{(a) Data generated by Stochastic Burger's equation with observation noise (b) Eigenvalues of the continuous time system.(c) Zoomed in region of dominant eigenvalues captured by the three methods.}\label{fig:PDE}

\end{figure}

Since the eigenvalues obtained with robust DMD algorithm shows a closer match, it is logical to believe that prediction using the robust DMD Koopman operator will be much more efficient compared to either regular DMD or subspace DMD.

\begin{figure}[htp!]
\centering
\subfigure[]{\includegraphics[scale=.25]{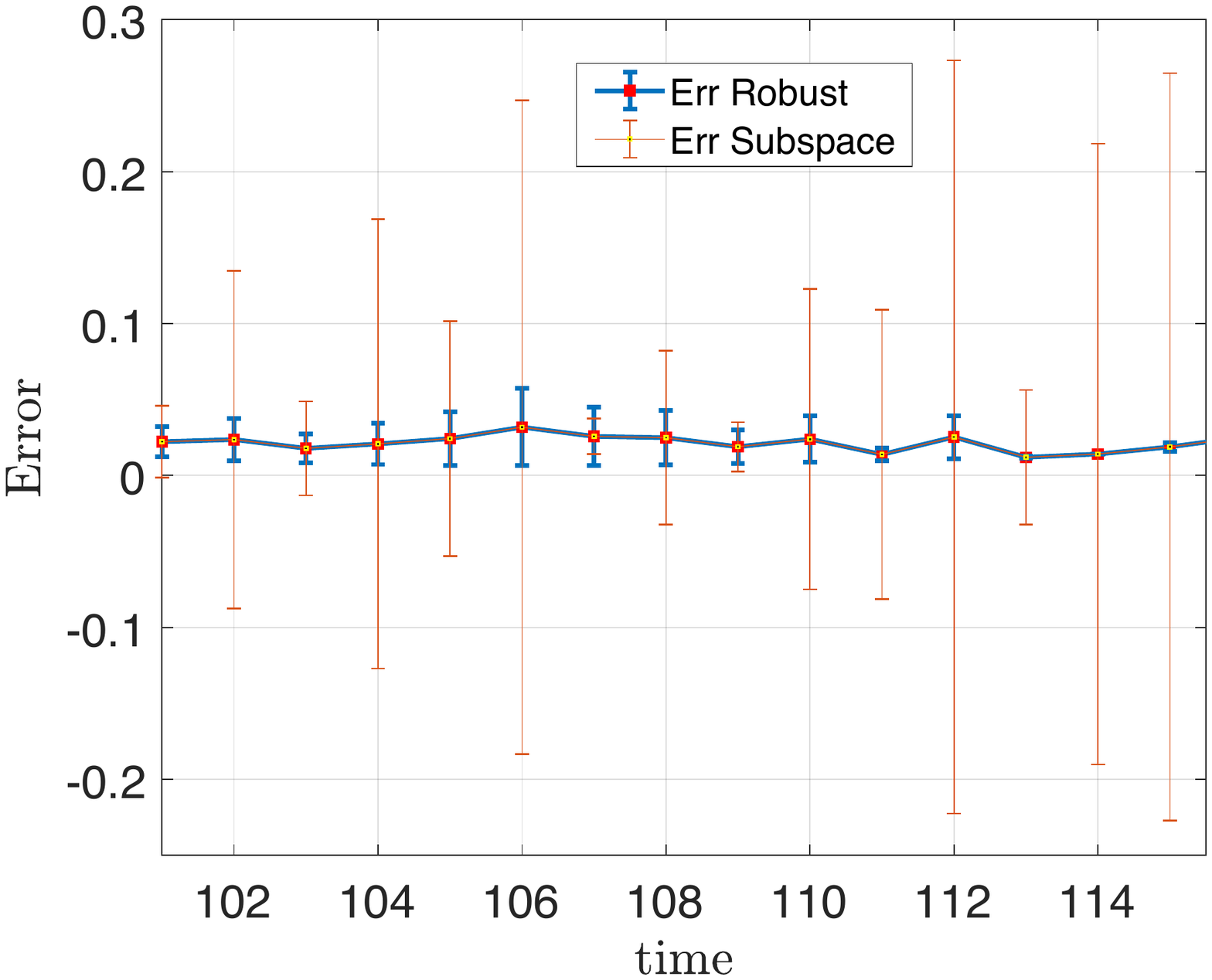}}
\subfigure[]{\includegraphics[scale=.25]{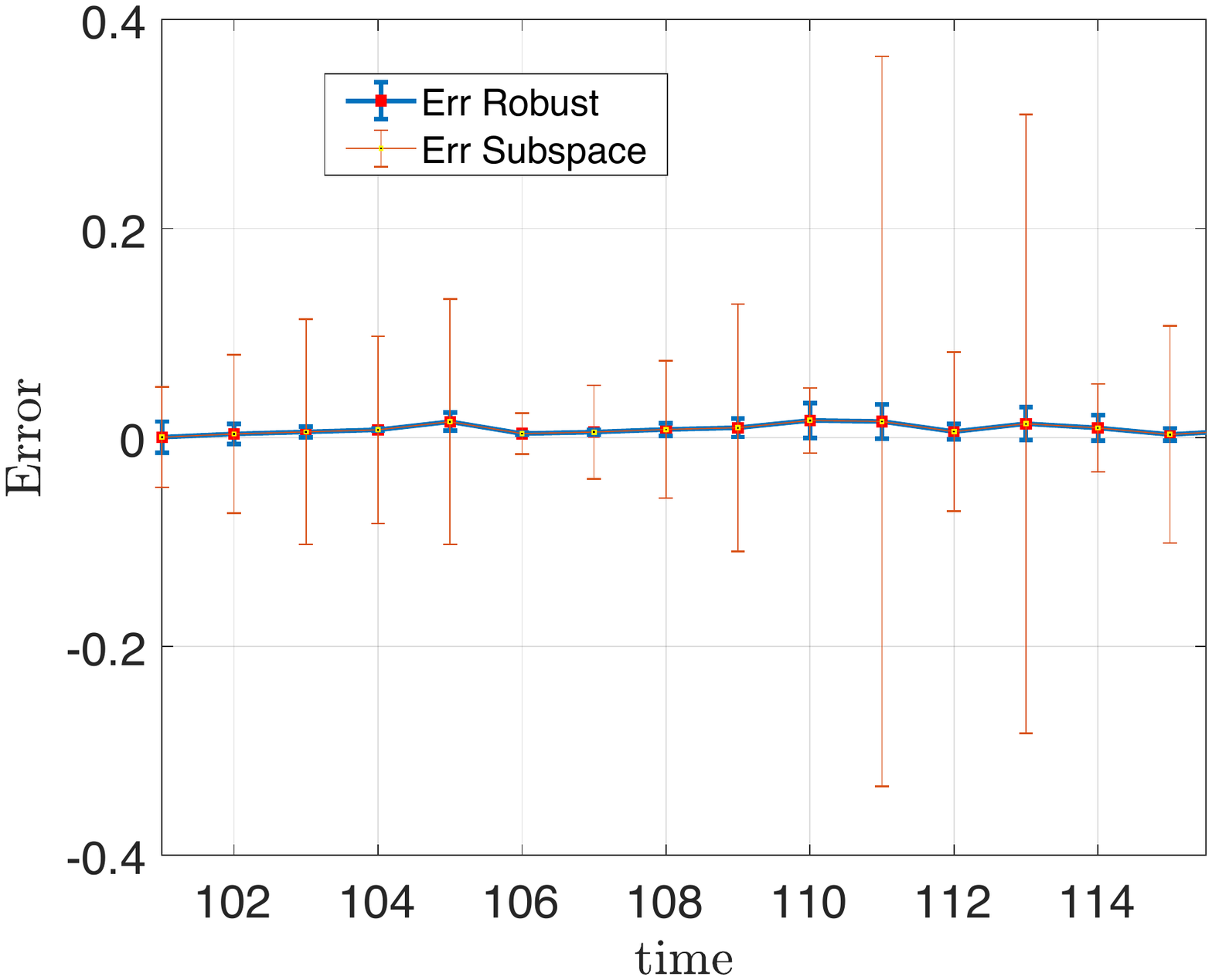}}
\subfigure[]{\includegraphics[scale=.25]{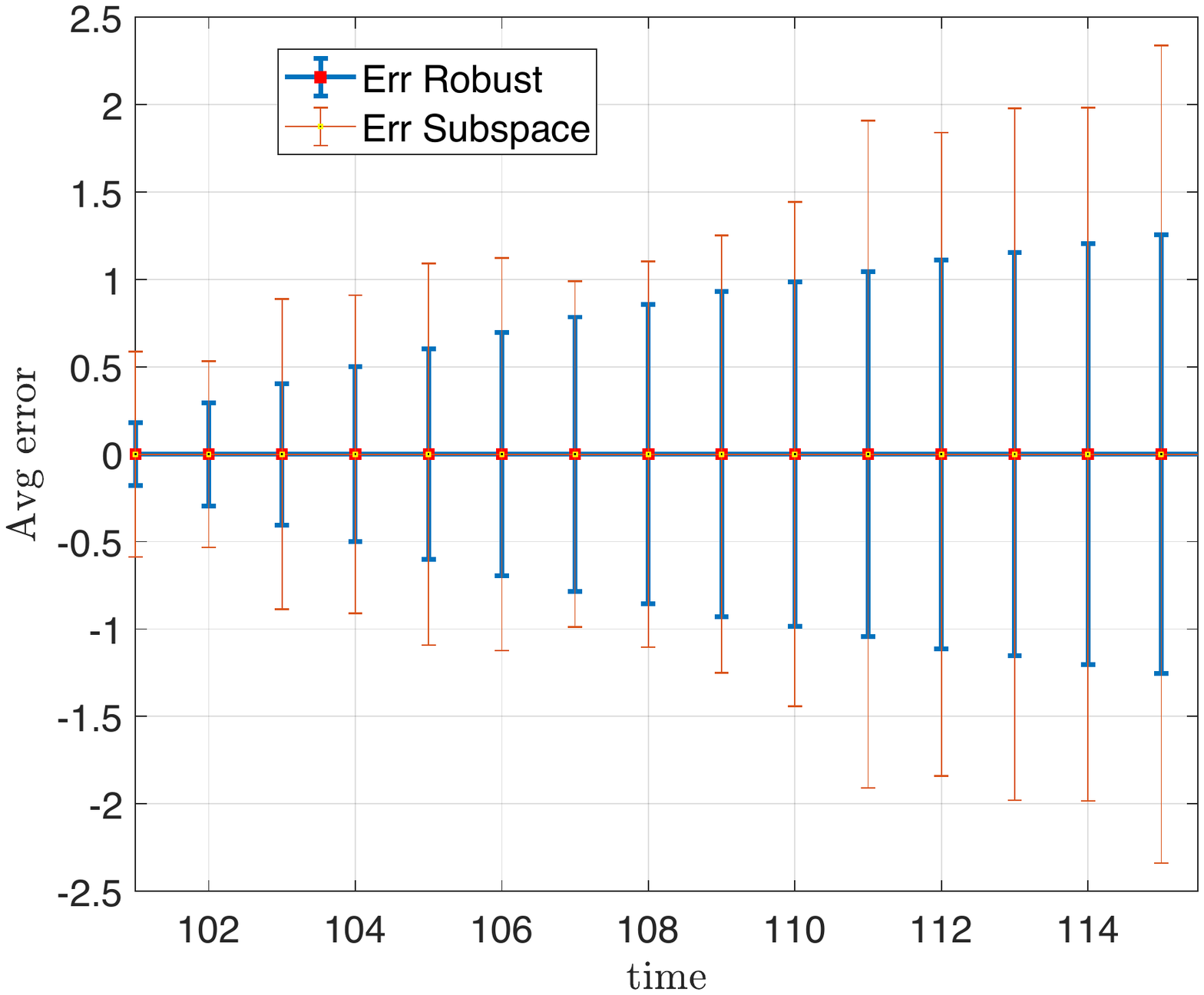}}
\caption{(a) Error in prediction of $z_2$. (b) Error in prediction of $z_{50}$. (c) Average error in prediction.}\label{pde_err}
\end{figure}

We used 100 time steps data as the training data and used the obtained operators to predict future 15 states. Since $\Delta x = 1\times 10^{-2}$, the discretized system has 100 states, namely, $\begin{pmatrix}
z_1 & z_2 & \cdots & z_{100}\end{pmatrix}$. In Fig. \ref{pde_err}(a) and (b) we plot the errors in prediction of $z_2$ and $z_{50}$ respectively, when using robust DMD and subspace DMD algorithms. It can be seen that the error in prediction with robust DMD is much smaller than the error obtained with subspace DMD. In Fig. \ref{pde_err}(c) we plot the average error in all the states at each prediction time step. We do not plot the error using regular DMD because since the Koopman operator obtained using regular DMD has an eigenvalue with large positive real part, it is highly unstable and the error diverges exponentially fast.

\section{Conclusions}\label{section_conclusion}
Robust optimization-based approach is proposed for the finite dimensional approximation of Koopman operator for dynamical system forced with process and measurement noise. The proposed approach leads to a better and stable approximation of Koopman operator compared to regular DMD-based approximation and subspace DMD algorithm. We showed that the proposed robust formulation for the approximation of Koopman operator allows us to balance the tradeoff between the quality of approximation and complexity of approximation. This allows us to make use of the robust formulation for the design of data-driven predictor dynamics for nonlinear systems. 

\bibliographystyle{IEEEtran}
\bibliography{ref,ref1,reference}

\end{document}